\documentclass[12pt,letterpaper]{amsart}

\oddsidemargin0.25in
\evensidemargin0.25in
\textwidth6.00in
\topmargin0.00in
\textheight8.50in

\newcounter{myenum}

\newcommand{\I}{\operatorname{Im}}
\newcommand{\ds}{\displaystyle}

\newcommand{\eps}{\varepsilon}

\newcommand{\crit}{{\textnormal{c}}}
\newcommand{\crin}{{\textnormal{c},0}}

\usepackage{amssymb}
\usepackage{amsthm}
\usepackage{amsxtra}
\usepackage{graphicx}

\newtheorem{theorem}{Theorem}

\newtheorem{proposition}[theorem]{Proposition}
\newtheorem{lemma}[theorem]{Lemma}
\newtheorem{corollary}[theorem]{Corollary}

\theoremstyle{remark}
\newtheorem{remark}[theorem]{Remark}

\newcommand{\cR}{\mathbb{R}}

\newcommand{\ep}{\epsilon}
\newcommand{\de}{\delta}

\newcommand{\supp}{\mathrm{supp}\,}
\newcommand{\lap}{\triangle}

\DeclareMathOperator{\im}{Im}

\numberwithin{equation}{section}
\numberwithin{theorem}{section}

\ifx\pdfoutput\undefined
  \DeclareGraphicsExtensions{.pstex, .eps}
\else
  \ifx\pdfoutput\relax
    \DeclareGraphicsExtensions{.pstex, .eps}
  \else
    \ifnum\pdfoutput>0
      \DeclareGraphicsExtensions{.pdf}
    \else
      \DeclareGraphicsExtensions{.pstex, .eps}
    \fi
  \fi
\fi

\title[Nonradial 3D cubic NLS]
{Scattering for the non-radial 3D cubic nonlinear Schr\"odinger
equation}

\author{Thomas Duyckaerts}
\address{Universit\'e de Cergy-Pontoise, UMR CNRS 8088, France}
\author{Justin Holmer}
\address{University of California, Berkeley}
\author{Svetlana Roudenko}
\address{Arizona State University}

\begin{document}

\maketitle

\begin{abstract}
Scattering of radial $H^1$ solutions to the 3D focusing cubic
nonlinear Schr\"odinger equation below a mass-energy threshold
$M[u]E[u] < M[Q]E[Q]$ and satisfying an initial mass-gradient bound
$\Vert u_0 \Vert_{L^2} \Vert \nabla u_0 \Vert_{L^2} < \Vert Q
\Vert_{L^2} \Vert \nabla Q \Vert_{L^2}$, where $Q$ is the ground
state, was established in Holmer-Roudenko \cite{HR2}.  In this note,
we extend the result in \cite{HR2} to non-radial $H^1$ data.
For this, we prove a
non-radial profile decomposition involving a spatial translation
parameter.   Then, in the spirit of Kenig-Merle \cite{KM06b}, we control via
momentum conservation the rate of divergence of the spatial
translation parameter and by a convexity argument based on a local
virial identity deduce scattering. An application to the defocusing
case is also mentioned.
\end{abstract}

\section{Introduction}

We consider the Cauchy problem for the cubic focusing nonlinear
Schr\"odinger (NLS) equation on $\mathbb{R}^3$:
\begin{equation}
 \label{E:NLS}
i\partial_t u +\Delta u + |u|^2 \, u=0, \quad (x,t)\in
\mathbb{R}^3\times \mathbb{R},
\end{equation}
\begin{equation}
 \label{E:initialdata}
u(x,0) = u_0 \in H^1(\cR^3).
\end{equation}
\textbf{}It is locally well-posed (e.g., see Cazenave
\cite{Caz-book}).  The equation has 3 conserved quantities; namely,
the mass $M[u]$, energy $E[u]$ and momentum $P[u]$:
$$
M[u] = \int |u(x,t)|^2 \, dx = M[u_0],
$$
$$
E[u] = \frac12\int |\nabla u(x,t)|^2 \, dx - \frac14 \int |u(x,t)|^4
\, dx = E[u_0],
$$
$$
P[u] = \im \int \bar{u}(x,t) \, \nabla u(x,t) \, dx = P[u_0].
$$
The scale-invariant Sobolev norm is $\dot{H}^{1/2}$ and the
scale-invariant Lebesgue norm is $L^3$. Let $u(x,t) = e^{it} Q(x)$;
then $u$ solves \eqref{E:NLS} provided $Q$ solves the nonlinear
elliptic equation
\begin{equation}
 \label{E:Q}
-Q + \Delta Q + |Q|^2 \, Q=0.
\end{equation}
This equation has an infinite number of solutions in $H^1(\cR^3)$.
The solution of minimal mass, hereafter denoted by $Q(x)$, is
positive, radial, exponentially decaying, and is called {\it the
ground state}. For further properties of $Q$, we refer to Weinstein
\cite{W83}, Holmer-Roudenko \cite{HR2}, Cazenave \cite{Caz-book}.

In Holmer-Roudenko \cite[Theorem 1.1]{HR2} (see also Holmer-Roudenko
\cite{HR1}), it was proved that under the condition $M[u] E[u] <
M[Q]E[Q]$, solutions to \eqref{E:NLS}-\eqref{E:initialdata} globally
exist if $u_0$ satisfies
\begin{equation}
 \label{E:gradient}
\Vert u_0 \Vert_{L^2} \Vert \nabla u_0 \Vert_{L^2} < \Vert Q
\Vert_{L^2} \Vert \nabla Q \Vert_{L^2},
\end{equation}
and radial solutions with initial data satisfying \eqref{E:gradient}
scatter in $H^1$ in both time directions.  This means that there
exist $\phi_\pm \in H^1$ such that
$$
\lim_{t\to \pm\infty} \|u(t)-e^{it\Delta}\phi_\pm\|_{H^1} =0.
$$
In this note we extend the scattering result to include non-radial $H^1$ data.

\begin{theorem}
 \label{T:non-radial}
Let $u_0\in H^1$ and let $u$ be the corresponding solution to
\eqref{E:NLS} in $H^1$. Suppose
\begin{equation}
 \label{E:mass_energy_threshold}
M[u]E[u]<M[Q]E[Q].
\end{equation}
If $\|u_0\|_{L^2}\|\nabla u_0\|_{L^2}<\|Q\|_{L^2}\|\nabla
Q\|_{L^2}$, then $u$ scatters in $H^1$.
\end{theorem}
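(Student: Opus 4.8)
The plan is to run the concentration--compactness and rigidity scheme of Kenig--Merle \cite{KM06b}, using as inputs the local theory and variational analysis of \cite{HR2}: (i) a global $H^1$ solution $u$ with finite $\dot H^{1/2}$ Strichartz norm $\|u\|_{S(\dot H^{1/2})}$ scatters, and $\|u\|_{S(\dot H^{1/2})}<\infty$ whenever $M[u]E[u]$ is small enough; (ii) under \eqref{E:mass_energy_threshold} and \eqref{E:gradient} there is $\delta>0$ with $M[u_0]E[u_0]\le(1-\delta)M[Q]E[Q]$, the solution is global, $E[u]>0$, the bound $\|u(t)\|_{L^2}\|\nabla u(t)\|_{L^2}\le(1-\delta')\|Q\|_{L^2}\|\nabla Q\|_{L^2}$ holds for all $t$, and hence $\|\nabla u(t)\|_{L^2}^2-\frac34\|u(t)\|_{L^4}^4\ge c_\delta\|\nabla u(t)\|_{L^2}^2>0$. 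Call a solution \emph{admissible} if it satisfies \eqref{E:gradient}, and let $(ME)_\crit$ be the supremum of all $c>0$ for which every admissible solution with $M[u]E[u]<c$ scatters; by (i), $(ME)_\crit>0$, and Theorem~\ref{T:non-radial} is equivalent to $(ME)_\crit\ge M[Q]E[Q]$. Assume for contradiction that $(ME)_\crit<M[Q]E[Q]$.

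Take admissible solutions $u_n$ with $M[u_n]E[u_n]\to(ME)_\crit$ that do not scatter forward in time; using the scaling symmetry (which fixes both $M[u]E[u]$ and $\|u_0\|_{L^2}\|\nabla u_0\|_{L^2}$) normalize so that $\{u_n(0)\}$ is bounded in $H^1$. Apply the non-radial linear profile decomposition established in this note: $u_n(0)=\sum_{j=1}^J e^{it_n^j\Delta}\varphi^j(\cdot-x_n^j)+w_n^J$, with profiles $\varphi^j\in H^1$, time parameters $t_n^j$, and --- the new feature --- spatial translation parameters $x_n^j$; since $\{u_n(0)\}$ is bounded in $H^1$ (not merely in $\dot H^{1/2}$), no scaling parameters are needed. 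Standard Pythagorean decoupling of $\|\cdot\|_{L^2}^2$, $\|\nabla\cdot\|_{L^2}^2$ and $\|\cdot\|_{L^4}^4$ along the decomposition, together with the elementary inequality $(\sum a_j)(\sum b_j)\ge\sum a_jb_j$ for nonnegative sequences (applied to the pairs $(M,E)$ and $(\|\cdot\|_{L^2}^2,\|\nabla\cdot\|_{L^2}^2)$), shows that each nontrivial profile is again admissible with $M[\cdot]E[\cdot]\le(ME)_\crit$ and that at most one of them can carry positive mass--energy (two or more would each have $M[\cdot]E[\cdot]<(ME)_\crit$, hence scatter by definition of $(ME)_\crit$, and a long--time perturbation argument for \eqref{E:NLS} would then force $u_n$ to scatter). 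Running the perturbation argument once more with the single surviving profile, using $\|e^{it\Delta}w_n^J\|_{S(\dot H^{1/2})}\to0$ and the positivity of $E$ on admissible data, one obtains a critical solution $u_\crit$: global, admissible, with $M[u_\crit]E[u_\crit]=(ME)_\crit$, $\|u_\crit\|_{S(\dot H^{1/2})}=+\infty$, and such that for some continuous $x:\cR\to\cR^3$ the set $K=\overline{\{\,u_\crit(\cdot-x(t),t):t\in\cR\,\}}$ is compact in $H^1$; in particular $\inf_t\|\nabla u_\crit(t)\|_{L^2}>0$.

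The essentially new point, absent in the radial argument of \cite{HR2}, is to control the translation parameter $x(t)$ via momentum conservation. First, $P[u_\crit]=0$: if $\tilde u$ is the Galilean boost of $u_\crit$ with velocity $-P[u_\crit]/M[u_\crit]$, then $M[\tilde u]=M[u_\crit]$, $\tilde u$ scatters if and only if $u_\crit$ does, and at every time $\|\nabla\tilde u(t)\|_{L^2}^2=\|\nabla u_\crit(t)\|_{L^2}^2-|P[u_\crit]|^2/M[u_\crit]$ and $E[\tilde u]=E[u_\crit]-\frac12|P[u_\crit]|^2/M[u_\crit]$, so $\tilde u$ is still admissible and nonscattering, forcing $M[\tilde u]E[\tilde u]\ge(ME)_\crit=M[u_\crit]E[u_\crit]$, which is possible only if $P[u_\crit]=0$. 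With zero momentum, a now-standard argument (cf. Kenig--Merle \cite{KM06b}) --- combining the conservation law for $P$ with the uniform-in-$t$ spatial localization coming from compactness of $K$ --- upgrades the a priori bound on $x(t)$ to $|x(t)|=o(|t|)$ as $|t|\to\infty$; in particular $M(T):=\sup_{[0,T]}|x(t)|=o(T)$.

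Finally, the rigidity step. Fix a radial $\psi\in C^\infty(\cR^3)$ with $\psi(r)=r^2$ for $r\le1$ and $\psi$ bounded, and set $z_R(t)=\int_{\cR^3}R^2\psi(x/R)\,|u_\crit(x,t)|^2\,dx$. A localized virial computation gives $z_R'(t)=2\,\im\int_{\cR^3}R\,(\nabla\psi)(x/R)\cdot\bar u_\crit\nabla u_\crit\,dx$, so $\sup_t|z_R'(t)|\le CR$, and $z_R''(t)=8\big(\|\nabla u_\crit(t)\|_{L^2}^2-\frac34\|u_\crit(t)\|_{L^4}^4\big)+\mathrm{Err}_R(t)$ with $|\mathrm{Err}_R(t)|\le C\int_{|x|\ge R}\big(|\nabla u_\crit|^2+R^{-2}|u_\crit|^2+|u_\crit|^4\big)\,dx$. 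By the coercivity in (ii) and $\inf_t\|\nabla u_\crit(t)\|_{L^2}>0$, the main term is $\ge2\mu>0$ for all $t$; by compactness of $K$ there is $\rho_0$ with $\int_{|x-x(t)|\ge\rho_0}\big(|\nabla u_\crit|^2+|u_\crit|^2+|u_\crit|^4\big)$ small uniformly in $t$, so there is $R_0$ such that whenever $R\ge R_0$ and $\sup_{[0,T]}|x(t)|\le R/2$ one has $|\mathrm{Err}_R(t)|\le\mu$, hence $z_R''(t)\ge\mu$ on $[0,T]$ and therefore $\mu T\le z_R'(T)-z_R'(0)\le 2CR$. Choosing $R=R(T):=\max(R_0,2M(T))$, which is $o(T)$, gives $\mu T\le 2C\,o(T)$, impossible for $T$ large. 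Thus $u_\crit\equiv0$, contradicting $\|u_\crit\|_{S(\dot H^{1/2})}=+\infty$; hence $(ME)_\crit\ge M[Q]E[Q]$ and Theorem~\ref{T:non-radial} follows. I expect the two genuinely new ingredients --- the non-radial profile decomposition with translation parameters together with the extraction of the compact critical element, and the momentum-based bound $|x(t)|=o(t)$ --- to be the main work; the remaining pieces are a non-radial transcription of \cite{HR2}.
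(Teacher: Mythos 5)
Your proposal follows essentially the same route as the paper: the non-radial profile decomposition with spatial translation parameters, extraction of a precompact-modulo-translation critical element, the Galilean boost argument (with the same algebra for $E[\tilde u]$ and $\|\nabla\tilde u\|_{L^2}^2$) to force $P[u_\crit]=0$, the localized center-of-mass estimate yielding $|x(t)|=o(t)$, and a localized virial/convexity argument for rigidity. The only differences are cosmetic (e.g., normalizing $\{u_n(0)\}$ bounded in $H^1$ versus $\|u_n\|_{L^2}=1$, and phrasing the lower bound for the main virial term via $\inf_t\|\nabla u_\crit(t)\|_{L^2}>0$ rather than via $E[u_\crit]>0$), so the two proofs are the same in substance.
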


The argument of \cite{HR2} in the radial case followed a strategy
introduced by Kenig-Merle \cite{KM06a} for proving global
well-posedness and scattering for the focusing energy-critical NLS.
The argument begins by contradiction:  suppose the threshold for
scattering is strictly below that claimed.  A profile decomposition
lemma based on concentration compactness principles (and analogous
to that of Keraani \cite{K01}) was invoked to prove the existence of
a global but nonscattering solution $u_\crit$ standing exactly at
the threshold between scattering and nonscattering.  The profile
decomposition lemma is again invoked to prove that the flow of
$u_\crit$ is a precompact subset of $H^1$, which then implies that
$u_\crit$ remains spatially localized uniformly in time.  This
uniform localization enabled the use of a local virial identity to
establish, with the aid of the sharp Gagliardo-Nirenberg inequality,
a strictly positive lower bound on the convexity (in time) of the
local mass of $u_\crit$.  Mass conservation is then violated at a
sufficiently large time.

In this paper, we show that the above program carries over to the
non-radial setting with the addition of two key ingredients.  First,
in \S\ref{S:profile}, we introduce a profile decomposition lemma
that applies to non-radial $H^1$ sequences.  To compensate for the
lack of localization at the origin induced by radiality, a spatial
translation sequence is needed.  We also here adapt the proof given
in \cite{HR2} of the energy Pythagorean expansion (Lemma
\ref{C:energy_expand}) to apply to non-radial sequences; in
\cite{HR2}, an inessential application of the compact embedding
$H_\textnormal{rad}^1 \to L^4$ was used at one point.   The profile
decomposition and concentration compactness techniques are
previously used in works of Keraani \cite{K01}, Gerard \cite{G96},
see also Bahouri and Gerard \cite{BG97}-\cite{BG99}, and originate
from P.-L. Lions \cite{L85a}-\cite{L85b}.

The application of the non-radial profile decomposition to time
slices of the flow of the critical solution $u_\crit$ yields the
existence of a continuous time translation parameter $x(t)$ such
that the translated flow $u_\crit(\cdot-x(t),t)$ is precompact in
$H^1$ (Prop. \ref{P:crit_compact}). This implies the localization of
$u_\crit(\cdot,t)$ near $x(t)$ (as opposed to the radial case, in
which localization is obtained near the origin).

Obtaining suitable control on the behavior of $x(t)$ is the main new
step beyond \cite{HR2}.  This is done by following a method
introduced by Kenig-Merle \cite{KM06b} (who applied it to the
energy-critical nonlinear wave equation).  First, we argue that by
Galilean invariance, the solution $u_\crit$ must have zero momentum
(see \S \ref{S:zeromomentum}). An appropriate selection of the phase
shift is possible in our case since our solution belongs to
$L^2$.\footnote{It could not be applied in the Kenig-Merle paper
\cite{KM06a} on the energy critical NLS since the argument there
takes place in $\dot H^1$ .}  This zero-momentum solution is then
shown in \S\ref{S:x-control} to have a near-conservation of
localized center-of-mass, which provides the desired control on the
rate of divergence of $x(t)$ (specifically, $x(t)/t \to 0$ as $t\to
\infty$).

In \S\ref{S:defocusing}, we remark on the adaptation of these
techniques to the defocusing cubic NLS in 3D.

{\bf Acknowledgments.} J.H. was supported, in part, by an NSF
postdoctoral fellowship. T.D. was partially supported by the French
ANR Grant ONDNONLIN. Part of this work was done during S.R. stay in
Paris funded by the Grant ONDNONLIN. 

T.D. and S.R. would like to
thank Frank Merle and Patrick G\'erard for fruitful discussions on
the subject.  We all thank Guixiang Xu and the referee for pointing out a few misprints and a mistake in the proof of Lemma \ref{C:energy_expand}.

\bigskip

\section{Non-radial profile and energy decompositions}
\label{S:profile}

We will make use of the Strichartz norm notation used in \cite{HR2}.
We say that $(q,r)$ is $\dot H^s$ Strichartz admissible (in 3D) if
$$
\frac2q+\frac3r=\frac32-s.
$$
Let
$$
\|u\|_{S(L^2)} = \sup_{\substack{(q,r)\; L^2 \text{ admissible} \\
2\leq r \leq 6, \;  2\leq q \leq \infty}} \|u\|_{L_t^qL_x^r}.
$$
Define
$$
\|u\|_{S(\dot H^{1/2})} = \sup_{\substack{(q,r)\; \dot H^{1/2} \text{ admissible} \\
3\leq  r \leq 6^-, \; 4^+ \leq q \leq \infty}} \|u\|_{L_t^qL_x^r}\,,
$$
where $6^-$ is an arbitrarily preselected and fixed number $<6$;
similarly for $4^+$.

\begin{lemma}[Profile expansion]
 \label{L:pe}
Let $\phi_n(x)$ be a uniformly bounded sequence in $H^1$. Then for
each $M$ there exists a subsequence of $\phi_n$, also denoted
$\phi_n$, and
\begin{enumerate}
\item
for each $1\leq j \leq M$, there exists a (fixed in $n$)
profile $\psi^j(x)$ in $H^1$,
\item
for each $1\leq j\leq M$, there exists a sequence (in $n$) of time
shifts $t^j_n$,
\item
for each $1\leq j \leq M$, there exists a sequence (in $n$) of space
shifts $x_n^j$,
\item
there exists a sequence (in $n$) of remainders $W_n^M(x)$ in $H^1$,
\end{enumerate}
such that
$$
\phi_n(x) = \sum_{j=1}^M e^{-it^j_n\Delta}\psi^j(x-x_n^j) +
W^M_n(x).
$$
The time and space sequences have a pairwise divergence property,
i.e., for $1\leq j \neq k \leq M$, we have
\begin{equation}
 \label{E:pwdiverge}
\lim_{n\to +\infty} |t^j_n - t^k_n| + |x^j_n - x^k_n|= +\infty .
\end{equation}
The remainder sequence has the following asymptotic smallness
property\footnote{We can always pass to a subsequence in $n$ with
the property that $\| e^{it\Delta}W^M_n\|_{S(\dot H^{1/2})}$
converges. Therefore, we use $\lim$ and not $\limsup$ or $\liminf$.
Similar remarks apply for the limits that appear in the Pythagorean
expansion.}:
\begin{equation}
 \label{E:remainder_small}
\lim_{M\to +\infty} \left[ \lim_{n\to +\infty} \|
e^{it\Delta}W^M_n\|_{S(\dot H^{1/2})} \right] = 0 .
\end{equation}
For fixed $M$ and any $0\leq s \leq 1$, we have the asymptotic
Pythagorean expansion
\begin{equation}
\label{E:pythag} \|\phi_n\|_{\dot H^s}^2 = \sum_{j=1}^M
\|\psi^j\|_{\dot H^s}^2 + \|W^M_n\|_{\dot H^s}^2 + o_n(1).
\end{equation}
\end{lemma}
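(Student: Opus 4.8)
The plan is to follow the concentration-compactness scheme of Keraani \cite{K01} and of Bahouri--G\'erard, in the $H^1$-bounded, $\dot H^{1/2}$-critical form already used in the radial case in \cite{HR2}; the only genuinely new point is that a profile may now escape to spatial infinity, so a spatial translation parameter $x_n^j$ must be carried along. The engine is a single-profile ``inverse Strichartz'' extraction: \emph{if $\{f_n\}$ is bounded in $H^1$ by $A$ and $\lim_n\|e^{it\Delta}f_n\|_{S(\dot H^{1/2})}=\delta>0$, then, along a subsequence, there exist $t_n\in\mathbb{R}$, $x_n\in\mathbb{R}^3$ and $\psi\in H^1$ with $e^{it_n\Delta}f_n(\cdot+x_n)\rightharpoonup\psi$ weakly in $H^1$ and $\|\psi\|_{\dot H^{1/2}}\ge c(A)\,\delta^{\beta}$}, where $\beta>0$ depends only on the dimension and $c(A)>0$ on $A$. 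To prove it, first note that the mixed norms along the $\dot H^{1/2}$-admissible line are log-convex and that this line is preserved under harmonic interpolation of the exponent pair, so the supremum defining $\|\cdot\|_{S(\dot H^{1/2})}$ is attained at one of the two extreme admissible pairs; passing to a subsequence reduces us to a lower bound for $\|e^{it\Delta}f_n\|_{L_t^qL_x^r}$ at one fixed such pair. A refined Strichartz/Sobolev inequality (Bourgain, Merle--Vega, B\'egout--Vargas; see also \cite{K01}) then yields a dyadic frequency $N_n$ and a space--time point $(x_n,t_n)$ for which the $\dot H^{1/2}$-rescaled, translated, and time-shifted sequence $g_n$ has a nonzero weak-$\dot H^{1/2}$ limit $\psi_0$ with $\|\psi_0\|_{\dot H^{1/2}}\gtrsim_A\delta^{\beta}$. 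Here the full $H^1$ norm enters only to eliminate the scaling parameter: since the only norm-altering operation is the $\dot H^{1/2}$-rescaling, $\|g_n\|_{\dot H^1}\lesssim N_n^{-1/2}A$ and $\|g_n\|_{L^2}\lesssim N_n^{1/2}A$, so $N_n\to\infty$ would force $g_n\to0$ in $\dot H^1$ and $N_n\to0$ would force $g_n\to0$ in $L^2$, and in either case a duality argument gives $\psi_0=0$, a contradiction; hence $N_n$ lies in a compact subset of $(0,\infty)$, may be taken constant in the limit, and is absorbed into the profile $\psi$.

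Granting this, build the decomposition by iteration. Put $W_n^0=\phi_n$; given $W_n^{j-1}$, pass to a subsequence along which $\delta_j:=\lim_n\|e^{it\Delta}W_n^{j-1}\|_{S(\dot H^{1/2})}$ exists, and if $\delta_j=0$ set $\psi^j=0$, otherwise apply the extraction to $f_n=W_n^{j-1}$ to obtain $(t_n^j,x_n^j,\psi^j)$ with $\|\psi^j\|_{\dot H^{1/2}}\ge c_0\,\delta_j^{\beta}$ and set $W_n^j:=W_n^{j-1}-e^{-it_n^j\Delta}\psi^j(\cdot-x_n^j)$. By construction $\phi_n=\sum_{j=1}^Me^{-it_n^j\Delta}\psi^j(\cdot-x_n^j)+W_n^M$ and $e^{it_n^j\Delta}W_n^j(\cdot+x_n^j)\rightharpoonup0$ in $H^1$. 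The expansion \eqref{E:pythag} follows from the elementary fact that $h_n\rightharpoonup h$ in $H^1$ implies $\|h_n\|_{\dot H^s}^2=\|h\|_{\dot H^s}^2+\|h_n-h\|_{\dot H^s}^2+o_n(1)$ for every $0\le s\le1$ (the cross term is the value at the fixed vector $h$ of a linear functional bounded on $H^1$, since $\|\cdot\|_{\dot H^s}\lesssim\|\cdot\|_{H^1}$ there): applying it with $h_n=e^{it_n^j\Delta}W_n^{j-1}(\cdot+x_n^j)$ and using unitarity of $e^{it\Delta}$ and of translations on $\dot H^s$ gives $\|W_n^{j-1}\|_{\dot H^s}^2=\|\psi^j\|_{\dot H^s}^2+\|W_n^j\|_{\dot H^s}^2+o_n(1)$, which telescopes to \eqref{E:pythag}. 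Taking $s=0$ and $s=1$ shows $\|W_n^M\|_{H^1}\le CA$ uniformly in $M$ and in large $n$, so the extraction applies at each stage with the single constant $c_0=c(CA)>0$; and letting $M\to\infty$ in \eqref{E:pythag} with $s=\tfrac12$ gives $\sum_j\|\psi^j\|_{\dot H^{1/2}}^2\le\limsup_n\|\phi_n\|_{\dot H^{1/2}}^2<\infty$.

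It remains to verify \eqref{E:pwdiverge} and \eqref{E:remainder_small}. For the pairwise divergence, suppose that for some pair the left side of \eqref{E:pwdiverge} stays bounded; among all such pairs $(j,k)$ with $j<k$ choose one with $k-j$ minimal, and pass to a subsequence so that $t_n^j-t_n^k\to\tau$ and $x_n^j-x_n^k\to y$. Expanding $W_n^{k-1}=W_n^j-\sum_{j<l<k}e^{-it_n^l\Delta}\psi^l(\cdot-x_n^l)$ and applying the operation $g\mapsto e^{it_n^k\Delta}g(\cdot+x_n^k)$, the contribution of $W_n^j$ equals a sequence of unitary operators --- converging strongly, together with their adjoints, since $t_n^k-t_n^j\to-\tau$ and $x_n^k-x_n^j\to-y$ --- applied to $e^{it_n^j\Delta}W_n^j(\cdot+x_n^j)\rightharpoonup0$, hence tends weakly to $0$ in $H^1$; and for each $j<l<k$, minimality of $k-j$ forces $|t_n^j-t_n^l|+|x_n^j-x_n^l|\to\infty$, hence $|t_n^k-t_n^l|+|x_n^k-x_n^l|\to\infty$, so the fixed profile $\psi^l$ translated by a divergent spatial shift --- or evolved by $e^{is_n\Delta}$ with $|s_n|\to\infty$, which tends weakly to $0$ in $L^2$ by a stationary-phase argument, then composed with uniformly bounded operators --- also tends weakly to $0$ in $H^1$. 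Hence $e^{it_n^k\Delta}W_n^{k-1}(\cdot+x_n^k)\rightharpoonup0$, i.e. $\psi^k=0$, contradicting $\|\psi^k\|_{\dot H^{1/2}}\ge c_0\delta_k^{\beta}>0$. Finally, combining $\|\psi^j\|_{\dot H^{1/2}}\ge c_0\delta_j^{\beta}$ with $\sum_j\|\psi^j\|_{\dot H^{1/2}}^2<\infty$ gives $\delta_j\to0$, which is \eqref{E:remainder_small}; the limit in $n$ is taken along a diagonal subsequence extracted once and for all, as the footnote to the lemma permits.

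I expect the one real obstacle to be the first step: installing a refined Strichartz (or Sobolev) inequality compatible with the precise $S(\dot H^{1/2})$ norm used here, and checking that the $H^1$ bound pins the frequency scale $N_n$ to $\asymp1$, so that only the translation parameters $t_n^j,x_n^j$ survive in the decomposition. Once that is in place, the iteration, the Pythagorean expansion \eqref{E:pythag}, and the orthogonality \eqref{E:pwdiverge} of the parameters are routine manipulations of weak convergence in $H^1$.
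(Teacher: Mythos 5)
Your overall architecture -- iterate a single-profile extraction on the remainders $W_n^{j-1}$, derive \eqref{E:pythag} from weak convergence, propagate \eqref{E:pwdiverge} by a ``push the other terms to zero weakly'' argument, and conclude \eqref{E:remainder_small} from summability of $\|\psi^j\|_{\dot H^{1/2}}^2$ -- matches the paper exactly. Your treatments of \eqref{E:pythag}, \eqref{E:pwdiverge} (your minimal-$(k-j)$ variant is equivalent to the paper's induction on $M$), and \eqref{E:remainder_small} are all sound.

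The genuine difference, and the point you yourself flag as the ``one real obstacle,'' is how the single-profile extraction is carried out, and here you invoke heavier machinery than the paper needs. You propose a refined Strichartz/Sobolev inequality producing an $n$-dependent frequency scale $N_n$, followed by a separate argument that the $H^1$ bound pins $N_n\asymp 1$. The paper instead argues that since $\|e^{it\Delta}\phi_n\|_{L_t^4L_x^6}\lesssim\|\phi_n\|_{\dot H^{1/2}}$ is automatically bounded, every admissible norm interpolates against the single endpoint $\|e^{it\Delta}\phi_n\|_{L_t^\infty L_x^3}$; so one sets $A_1=\limsup_n\|e^{it\Delta}\phi_n\|_{L_t^\infty L_x^3}$, fixes once and for all a band-pass filter $\chi_r$ with $r\sim c_1^2/A_1^2$ (so the scale is pinned \emph{a priori} by the $H^1$ bound, not eliminated after the fact), and picks $(t_n^1,x_n^1)$ where $|\chi_r*e^{it_n^1\Delta}\phi_n(x_n^1)|\gtrsim A_1^3/c_1^2$; the lower bound $\|\psi^1\|_{\dot H^{1/2}}\gtrsim A_1^5/c_1^4$ then follows from $\|\chi_r\|_{\dot H^{-1/2}}\lesssim r$, Plancherel and Cauchy--Schwarz. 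This is self-contained, avoids the Bourgain/Merle--Vega refinement entirely, and is the same device as in Keraani and in the radial version \cite{HR2}. Two smaller remarks: your proposal leaves the inverse-Strichartz input unproved (it is the crux, not a formality), so as written the proof has a real gap unless you either cite a precise reference for the $S(\dot H^{1/2})$ version or reproduce the paper's fixed-scale argument; and the exponents in $\|g_n\|_{\dot H^1}\lesssim N_n^{\mp 1/2}A$, $\|g_n\|_{L^2}\lesssim N_n^{\pm 1/2}A$ are switched relative to the $\dot H^{1/2}$-invariant scaling $g(x)=\lambda f(\lambda x)$, though the conclusion that $N_n$ stays in a compact subset of $(0,\infty)$ is unaffected.
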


\begin{remark}
If the assumption that $\phi_n$ is uniformly bounded in $H^1$ is
weakened to the assumption that $\phi_n$ is uniformly bounded in
$\dot H^{1/2}$, then the above profile decomposition remains valid
provided a scaling parameter $\lambda$ is also involved, similar to
the theorem in \cite{K01}. However, it is not needed for the results
of this note and for simplicity of exposition the proof is omitted.
\end{remark}

\begin{proof}%
The proof is very close to the one of \cite[Lemma 5.2]{HR2}. We also
refer to \cite{K01} for a similar result in the energy-critical
case.

\medskip

\noindent\emph{Step 1. Construction of $\psi_n^1$.} Let
$A_1=\limsup_n \|e^{it\Delta}\phi_n\|_{L^{\infty}_tL^3_x}$. If
$A_1=0$, we are done. Indeed, for an arbitrary
$\dot{H}^{1/2}$-admissible couple $(q,r)$ we have
$$
\left\|e^{it\Delta}\phi_n \right\|_{L^q_tL^r_x} \leq
\left\|e^{it\Delta}\phi_n\right\|_{L^4_tL^6_x}^{\theta}\,
\left\|e^{it\Delta}\phi_n\right\|^{1-\theta}_{L^{\infty}_tL^3_x}
\quad\text{with} \quad \theta=\frac{4}{q}\in(0,1).
$$
Noting that $\left\|e^{it\Delta}\phi_n\right\|_{L^4_tL^6_x}\leq
C\|\phi_n\|_{\dot{H}^{1/2}}$, we get that $\limsup_n
\|e^{it\Delta}\phi_n\|_{S(\dot{H}^{1/2})}=0$, and we can take
$\psi^j=0$ for all $j$.

If $A_1>0$, let
$$
c_1=\limsup_n \|\phi_n\|_{H^1}.
$$
Extracting a subsequence from $\phi_n$, we show that there exist
sequences $t_n^1$, $x_n^1$ and a function $\psi^1\in H^1$ such that
\begin{gather}
 \label{E:weakCV1}
e^{it_n^1\Delta}\phi_n(\cdot+x_n^1)\rightharpoonup \psi^1 \text{ weakly in }H^1,\\
 \label{E:boundA1}
Kc_1^4\left\|\psi^1\right\|_{\dot{H}^{1/2}}\geq A_1^5,
\end{gather}
where $K>0$ is a constant independent of all parameters. %

Let $r=\frac{16c_1^2}{A_1^2}$ and $\chi_r$ be a radial Schwartz
function such that $\hat{\chi}_r (\xi)=1$ for $\frac{1}{r}\leq
|\xi|\leq r$, and $\supp \chi_r\subset
\left[\frac{1}{2r},2r\right]$. By the arguments of \cite{HR2}, there
exists sequences $t_n^1$, $x_n^1$ such that
$$
|\chi_r \ast e^{it_n^1\Delta}\phi_n(x_n^1)|\geq \frac{A_1^3}{32c_1^2}.
$$
Pass to a subsequence so that
$e^{it_n^1\Delta}\phi_n(\cdot+x_n^1)\rightharpoonup \psi^1$ weakly
in $H^1$. In \cite{HR2} the functions $\phi_n$ are radial, and thus,
by the radial Gagliardo-Nirenberg inequality, one can show that
$x_n^1$ is bounded in $n$, which is not necessarily the case here.
As in \cite{HR2}, the estimate $\|\chi_r\|_{\dot{H}^{-1/2}}\leq r$
yields, together with Plancherel and Cauchy-Schwarz inequalities,
the estimate \eqref{E:boundA1}.

Next, define $W_n^1(x)= \phi_n(x)-e^{-i t_n^1 \lap}
\psi^1(x-x_n^1)$. Since $e^{i t_n^1 \lap} \phi_n(\cdot+x_n^1)
\rightharpoonup \psi^1$ in $H^1$, expanding
$\|W_n^1\|^2_{\dot{H}^s}$ as an inner product and using the
definition of $W_n^1$, we obtain
$$
\lim_{n \to \infty} \|W_n^1\|^2_{\dot{H}^s} = \lim_{n \to \infty}
\|e^{i t_n^1 \lap} \phi_n\|^2_{\dot{H}^s} -
\|\psi^1\|^2_{\dot{H}^s}, \quad 0 \leq s \leq 1,
$$
which yields \eqref{E:pythag} for $M=1$. %

\medskip

\noindent\emph{Step 2. Construction of $\psi^j$ for $j\geq 2$.}  We construct the functions $\psi^j$ inductively, applying
Step 1 to the sequences (in $n$) $W_n^{j-1}$. Let $M\geq 2$.
Assuming that $\psi^j$, $x_n^j$, $t_n^j$ and $W_n^j$ are known for $j\in\{1,\ldots\,
M-1\}$, we consider
$$
A_{M}=\limsup_n \left\|W_n^{M-1}\right\|_{L^{\infty}_tL^3_x}.
$$
If $A_M=0$, we take, as in Step 1, $\psi^j=0$ for $j\geq M$. Assume $A_M>0$.
Applying Step 1 to the sequence %
$W_n^{M-1}$,
we obtain, extracting if necessary, %
sequences $x_n^{M}$, $t_n^{M}$ and a function $\psi^{M}\in H^1$ such
that  
\begin{gather}
 \label{E:weakCVM}
e^{it_n^{M}\Delta}W_n^{M-1}(\cdot+x_n^{M})\rightharpoonup \psi^M \text{ weakly in }H^1,\\
 \label{E:boundAM}
 Kc_M^4\left\|\psi^{M}\right\|_{\dot{H}^{1/2}}\geq A_{M}^5, 
\text{ where }c_M=\limsup_n \|W_n^{M-1}\|_{H^1}.
\end{gather}
We then define $W_n^M(x) = W_n^{M-1}(x) - e^{-i t_n^M \lap}
\psi^M(x-x_n^M)$. 

We next show \eqref{E:pwdiverge} and \eqref{E:pythag} by induction. Assume that \eqref{E:pythag} holds at rank $M-1$.
Expanding 
$$\left\|W_n^M \right\|^2_{\dot{H}^s}=\left\|e^{it_n^M\lap}W_n^M(\cdot+x_n^M) \right\|^2_{\dot{H}^s}=\left\|e^{it_n^M\lap}W_n^{M-1}(\cdot+x_n^M) - 
\psi^M\right\|^2_{\dot{H}^s}$$
and using the
weak convergence \eqref{E:weakCVM}, we obtain directly \eqref{E:pythag} at rank $M$.

Assume that the condition \eqref{E:pwdiverge} holds for $j,k\in \{1,\ldots,M-1\}$. Let $j\in
\{1,\ldots, M-1\}$. Then (here, $W_n^0=\phi_n$),
\begin{equation*}
-e^{it_n^j\Delta}W_n^{M-1}(x+x_n^j) +
e^{it_n^j\Delta}W_n^{j-1}(x+x_n^j) - \psi^j(x) = \sum_{k=j+1}^{M-1}
e^{i(t_n^j-t_n^k)\Delta}\psi^k (x+x_n^j-x_n^k).
\end{equation*}
By the orthogonality condition \eqref{E:pwdiverge}, the right hand
side converges to $0$ weakly in $H^1$ as $n$ tends to infinity.
Furthermore, by the definition of $W_n^{j-1}$,
$$
e^{it_n^j\Delta}W_n^{j-1}(x+x_n^j)-\psi^j(x)\underset{n\rightarrow +
\infty}{\rightharpoonup} 0 \text{ weakly in }H^1.
$$
Thus, $e^{it_n^j\Delta}W_n^{M-1}(x+x_n^j)$ must go to $0$ weakly in
$H^1$. From \eqref{E:weakCVM}, we deduce, if $\psi^M\neq 0$ 
$$
\lim_{n\rightarrow +\infty} |x_n^j-x_n^M|+|t_n^j-t_n^M|=+\infty,
$$
which shows that \eqref{E:pwdiverge} must also holds for $k=M$. 

It remains to show \eqref{E:remainder_small}. Note that by \eqref{E:pythag}, $c_M\leq c_1$ for all $M$. If for all $M$, $A_M>0$, we have by
\eqref{E:pythag}
\begin{equation*}
\sum_{M\geq 1} A_M^{10} \leq K^2c_1^8\sum_{n\geq 1}
\|\psi^M\|_{\dot{H}^{1/2}}^{2}\leq K^2c_1^8\limsup
\|\phi_n\|_{\dot{H}^{1/2}}^2<\infty,
\end{equation*}
which shows that $A_M$ tends to $0$ as $M$ goes to $\infty$,
yielding \eqref{E:remainder_small} and concluding the proof of Lemma
\ref{L:pe}.
\end{proof}

\begin{lemma}[Energy Pythagorean expansion]
 \label{C:energy_expand}
In the situation of Lemma \ref{L:pe}, we have
\begin{equation}
\label{E:en_expand} E[\phi_n] = \sum_{j=1}^M
E[e^{-it^j_n\Delta}\psi^j] + E[W^M_n] + o_n(1) .
\end{equation}
\end{lemma}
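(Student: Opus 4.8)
The plan is to reduce the energy expansion to the kinetic and potential parts separately. The kinetic part, $\frac12\|\phi_n\|_{\dot H^1}^2 = \frac12\sum_j \|\psi^j\|_{\dot H^1}^2 + \frac12\|W_n^M\|_{\dot H^1}^2 + o_n(1)$, is exactly \eqref{E:pythag} with $s=1$, together with the observation that $\|e^{-it_n^j\Delta}\psi^j\|_{\dot H^1} = \|\psi^j\|_{\dot H^1}$ since $e^{it\Delta}$ is unitary on $\dot H^1$. Likewise the $\dot H^1$ norm is translation invariant, so the space shifts $x_n^j$ play no role here. So the entire content of the lemma is the corresponding statement for the $L^4$ term:
\begin{equation*}
\|\phi_n\|_{L^4}^4 = \sum_{j=1}^M \|e^{-it_n^j\Delta}\psi^j\|_{L^4}^4 + \|W_n^M\|_{L^4}^4 + o_n(1).
\end{equation*}

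To prove this, first I would fix $M$ and write $\phi_n = \sum_{j=1}^M g_n^j + W_n^M$ with $g_n^j(x) \defeq e^{-it_n^j\Delta}\psi^j(x - x_n^j)$, and expand $\|\phi_n\|_{L^4}^4 = \int |\sum_j g_n^j + W_n^M|^4$. Multiplying out, one gets the "diagonal" terms $\sum_j \int |g_n^j|^4$, the remainder term $\int |W_n^M|^4$, and a large number of cross terms, each of which is a product of four factors drawn from $\{g_n^1,\dots,g_n^M,W_n^M\}$ (with complex conjugates) in which not all four factors are equal. The claim is that every such cross term is $o_n(1)$. For cross terms involving two distinct profiles $g_n^j$ and $g_n^k$ ($j\neq k$), this follows from the pairwise divergence \eqref{E:pwdiverge}: by a density argument we may assume $\psi^j,\psi^k \in C_c^\infty$, and then either $|t_n^j - t_n^k|\to\infty$, in which case the dispersive decay estimate $\|e^{it\Delta}f\|_{L^\infty} \lesssim |t|^{-3/2}\|f\|_{L^1}$ forces the relevant $L^p$-overlap to vanish, or $t_n^j - t_n^k$ stays bounded (pass to a limit) while $|x_n^j - x_n^k|\to\infty$, in which case the supports (after the bounded time evolution, approximately) separate and the product is eventually zero. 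For cross terms involving $W_n^M$ together with at least one factor $g_n^j$, I would use that $\|e^{it\Delta}W_n^M\|_{S(\dot H^{1/2})}$ is small as $n,M\to\infty$ by \eqref{E:remainder_small}, but we need smallness of $W_n^M$ itself (not its linear flow) in a norm that pairs against $L^4$. The clean way: write $W_n^M = e^{-it_n^{j}\Delta}$-translated error and test against $g_n^j$; using weak convergence \eqref{E:weakCVM}, $e^{it_n^j\Delta}W_n^M(\cdot + x_n^j)\rightharpoonup 0$ in $H^1$, hence (Rellich on compact sets, plus the fact that the mass of $g_n^j$ escapes to the complement in $L^4$) the products against $g_n^j$ vanish.

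The main obstacle, and the point the authors flag in their acknowledgment as the place the original argument was flawed, is handling the cross terms that mix $W_n^M$ with the profiles without appealing to a compact embedding $H^1\to L^4$ (which fails in the non-radial setting, and was used illegitimately in \cite{HR2}). So here I would be careful to localize: approximating $\psi^j$ by compactly supported functions, the free evolution $e^{-it_n^j\Delta}\psi^j$ is, up to small error, concentrated near $x = x_n^j$ on the relevant time scale only if $t_n^j$ is bounded — and if $t_n^j\to\infty$ the whole profile $g_n^j$ itself is small in $L^4$ by dispersion, so its cross terms are automatically $o_n(1)$. In the remaining case $t_n^j$ bounded, translate so $x_n^j = 0$ and use that $e^{it_n^j\Delta}W_n^M(\cdot+x_n^j)\rightharpoonup 0$ in $H^1$ together with local compactness to kill $\int (g_n^j)^3 W_n^M$, $\int (g_n^j)^2 W_n^M (W_n^M\text{ or another }g^k)$, etc. Finally, to upgrade the fixed-$M$ identity to the stated $o_n(1)$ form, I would note that the number of cross terms depends only on $M$ (not $n$), so for each fixed $M$ the sum of cross terms is genuinely $o_n(1)$; no double limit in $M$ is needed for \eqref{E:en_expand} as stated, though one does use \eqref{E:remainder_small} to control the $W_n^M$-involving cross terms uniformly enough to conclude. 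Combining the $\dot H^1$ and $L^4$ expansions and recalling $E[e^{-it_n^j\Delta}\psi^j] = \frac12\|\psi^j\|_{\dot H^1}^2 - \frac14\|e^{-it_n^j\Delta}\psi^j\|_{L^4}^4$ gives \eqref{E:en_expand}.
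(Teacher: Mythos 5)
Your reduction to the $L^4$ expansion is exactly how the paper begins, and your handling of the profile–profile cross terms (dispersion when $|t_n^j-t_n^k|\to\infty$, spatial support separation otherwise) matches the paper's Step~1. But from there the two proofs diverge in a genuine way. You attack the $W_n^M$-involving cross terms head-on, arguing that $e^{it_n^j\Delta}W_n^M(\cdot+x_n^j)\rightharpoonup 0$ in $H^1$ and then using Rellich on compact sets (together with the near-compact support of $\psi^j$ when $t_n^j$ is bounded, and the $L^4$ smallness of $e^{-it_n^j\Delta}\psi^j$ when $t_n^j\to\infty$) to show each such cross term vanishes as $n\to\infty$. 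This is a legitimate route, and since $H^1(B_R)\hookrightarrow L^4(B_R)$ is compact in \emph{any} dimension $\le 3$, it does not invoke the illicit global radial embedding that the acknowledgment refers to. Two minor caveats: the weak convergence you need is not literally \eqref{E:weakCVM} (which is for $W_n^{M-1}$ at index $M$), but rather its consequence via the orthogonality \eqref{E:pwdiverge} that $e^{it_n^j\Delta}W_n^{M}(\cdot+x_n^j)\rightharpoonup 0$ for every $j\le M$; and your closing sentence about ``using \eqref{E:remainder_small} to control the $W_n^M$-involving cross terms'' is a red herring in your own scheme --- the weak-convergence/Rellich argument is what does the work, not the Strichartz smallness of the remainder.

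The paper avoids cross terms with $W_n^M$ entirely. It observes, via the interpolation $\|W_n^{M_1}\|_{L^4}\le\|e^{it\Delta}W_n^{M_1}\|_{L^\infty_tL^3_x}^{1/2}\|\phi_n\|_{H^1}^{1/2}$, that $\lim_{M_1\to\infty}\lim_n\|W_n^{M_1}\|_{L^4}=0$, and then runs an $\eps$-argument: choose $M_1\ge M$ large, write $\phi_n-W_n^{M_1}$ and $W_n^M-W_n^{M_1}$ as finite sums of profiles, apply the pure-profile Pythagorean expansion of Step~1 to those, and absorb the $O(\|W_n^{M_1}\|_{L^4})$ differences into $\eps$. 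The paper's telescoping trick buys you a proof with no local-compactness input at all (only the Strichartz smallness already established), at the cost of an auxiliary parameter $M_1$. Your version is more direct and perhaps more geometrically transparent, at the cost of a longer bookkeeping over the many cross terms; if you write it out you should be careful to treat uniformly the terms where $W_n^M$ appears to the second and third powers, e.g.\ bounding $\left|\int |W_n^M|^3 g_n^j\right|$ by cutting $g_n^j$ inside and outside $B(x_n^j,R)$ and applying Rellich only on the inner piece.
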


\begin{proof}
According to \eqref{E:pythag}, it suffices to establish for all $M\geq 1$,
\begin{equation}
 \label{E:pythagL4}
\|\phi_n\|_{L^4}^4=\sum_{j=1}^M
\big\|e^{-it^j_n\Delta}\psi^j\big\|_{L^4}^4
+\big\|W^M_n\big\|_{L^4}^4+o_n(1).
\end{equation}
\noindent\emph{Step 1. Pythagorean expansion of a sum of orthogonal
profiles.} We show that if $M\geq 1$ is fixed, orthogonality
condition  \eqref{E:pwdiverge} implies
\begin{equation}
 \label{E:pythagpsi}
\left\|\sum_{j=1}^M
e^{-it^j_n\Delta}\psi^j(\cdot-x_n^j)\right\|_{L^4}^4 =\sum_{j=1}^M
\left\|e^{-it^j_n\Delta}\psi^j\right\|_{L^4}^4+o_n(1).
\end{equation}
By reindexing, we can arrange so that there is $M_0\leq M$ such that
\begin{itemize}
\item
For $1\leq j \leq M_0$, we have that $t_n^j$ is bounded in $n$.
\item
For $M_0+1\leq j \leq M$, we have that $|t_n^j|\to \infty$ as $n\to \infty$.
\end{itemize}
By passing to a subsequence, we may assume that for each $1\leq j
\leq M_0$, $t_n^j$ converges (in $n$), and by adjusting the profiles
$\psi^j$ we can take $t_n^j=0$.

Note that
\begin{equation}
 \label{E:timedecay}
M_0 +1\leq k\leq M  \Longrightarrow \lim_{n\rightarrow+\infty}
\left\|e^{-it_n^k\Delta}\psi^k\right\|_{L^4}=0.
\end{equation}
Indeed, in this case $|t_n^k| \to \infty$ as $n\to \infty$. For a
function $\tilde \psi\in \dot H^{3/4}\cap L^{4/3}$, from Sobolev
embedding and the $L^p$ space-time decay estimate of the linear
flow, we obtain
$$
\|e^{-it_n^k\Delta}\psi^k \|_{L^4} \leq c\|\psi^k-\tilde
\psi\|_{\dot H^{3/4}} +
\frac{c}{|t_n^k|^{3/4}}\|\tilde\psi\|_{L^{4/3}}.
$$
By approximating $\psi^k$ by $\tilde \psi\in C_c^\infty$ in $\dot
H^{3/4}$ and sending $n\to \infty$, we obtain \eqref{E:timedecay}.

By \eqref{E:pwdiverge}, if $1\leq j<k\leq M_0$, $\lim_n
|x_n^j-x_n^k|=+\infty$, and thus, %
it implies
$$
\left\|\sum_{j=1}^{M_0} \psi^j \left(\cdot-x_n^j\right)
\right\|_{L^4}^4 = \sum_{j=1}^{M_0}
\big\|\psi^j\big\|_{L^4}^4+o_n(1),
$$
which yields, together with \eqref{E:timedecay}, expansion \eqref{E:pythagpsi}.

\medskip

\noindent\emph{Step 2. End of the Proof.}
We first note
\begin{equation}
 \label{E:remaindersmallL4}
\lim_{M_1\rightarrow+\infty} \left(\lim_{n\rightarrow+\infty}
\left\|W_n^{M_1}\right\|_{L^4}\right) = 0.
\end{equation}
Indeed,
\begin{align*}
\|W_n^{M_1}\|_{L_x^4} &\leq \|e^{it\Delta}W_n^{M_1}\|_{L_t^\infty L_x^4} \\
&\leq \|e^{it\Delta}W_n^{M_1}\|_{L_t^\infty L_x^3}^{1/2} \|e^{it\Delta}W_n^{M_1}\|_{L_t^\infty \dot H_x^1}^{1/2} \\
&\leq \|e^{it\Delta}W_n^{M_1}\|_{L_t^\infty L_x^3}^{1/2} \sup_n \|\phi_n\|_{H^1}^{1/2}.
\end{align*}
By \eqref{E:remainder_small}, we get \eqref{E:remaindersmallL4}.

Let $M\geq 1$ and $\eps>0$. Note that $\{\phi_n\}_n$ is uniformly
bounded in $L^4$, since it is uniformly bounded in $H^1$ by the
hypothesis; furthermore, by \eqref{E:remaindersmallL4} $\{W_n^M\}_n$
is also uniformly bounded in $L^4$. Thus,
we can choose $M_1\geq M$ and $N_1$ such that for $n\geq N_1$, we
have
\begin{multline}
\label{E:L4a}
\Big|\| \phi_n-W_n^{M_1}\|_{L^4}^4 -\|\phi_n\|_{L^4}^4\Big|
+\Big|\|W_n^M-W_n^{M_1}\|_{L^4}^4-\|W_n^M\|_{L^4}^4\Big|\\
\leq C\left[\Big(\sup_n\|\phi_n\|_{L^4}^3+\sup_n\|W_n^M\|_{L^4}^3\Big)\|W_n^{M_1}\|_{L^4}+\|W_n^{M_1}\|_{L^4}^4\right]\leq  \eps.
\end{multline}
By \eqref{E:pythagpsi} , we get $N_2\geq N_1$ such that for $n\geq  N_2$,
\begin{equation}
\label{E:L4b}
\bigg|\left\|\phi_n-W_n^{M_1}\right\|_{L^4}^4 -\sum_{j=1}^{M_1} \left\|e^{-it^j_n\Delta}\psi^j\right\|_{L^4}^4\bigg|\leq \eps.
\end{equation}
Using the definition of $W_n^j$, expand $W_n^M-W_n^{M_1}$ to obtain
$$W_n^M-W_n^{M_1}=\sum_{j=M+1}^{M_1} e^{-it^j_n\Delta}\psi^j(\cdot-x_j).$$ 
By \eqref{E:pythagpsi} there exists $N_3\geq N_2$ such that for $n\geq  N_2$,
\begin{equation}
\label{E:L4c}
\bigg|\left\|W_n^M-W_n^{M_1}\right\|_{L^4}^4 -\sum_{j=M+1}^{M_1} \left\|e^{-it^j_n\Delta}\psi^j\right\|_{L^4}^4\bigg|\leq \eps.
\end{equation}
By \eqref{E:L4a}, \eqref{E:L4b} and \eqref{E:L4c}, we obtain that for $n\geq N_3$,
$$ \bigg|\left\|\phi_n\right\|_{L^4}^4 -\sum_{j=1}^{M} \left\|e^{-it^j_n\Delta}\psi^j\right\|_{L^4}^4-\left\|W_n^M\right\|_{L^4}^4\bigg|\leq 4\eps,$$
which concludes the proof of \eqref{E:pythagL4}.
\end{proof}

\section{Outline of the proof of the main result}
\label{S:outline}
Let $u(t)$ be the corresponding $H^1$ solution to
\eqref{E:NLS}-\eqref{E:initialdata}. By Theorem 1.1(1)(a) in
\cite{HR2} the solution is globally well-posed, so our goal is to
show that
\begin{equation}
 \label{E:H12scatter}
\Vert u \Vert_{S(\dot{H}^{1/2})} < \infty.
\end{equation}
This combined with Proposition 2.2 from \cite{HR2} will give $H^1$
scattering. We will use the strategy of \cite{KM06a}. We shall say
that $\textnormal{SC}(u_0)$ holds if \eqref{E:H12scatter} is true
for the solution $u(t)$ generated from $u_0$.

By the small data theory there exists $\delta > 0$ such that if
$M[u]E[u]< \delta$ and $\|u_0\|_{L^2}\|\nabla
u_0\|_{L^2}<\|Q\|_{L^2}\|\nabla Q\|_{L^2}$, then
\eqref{E:H12scatter} holds. For each $\de > 0$ define the set $S_\de$
to be the collection of all such initial data in $H^1$:
$$
S_\de = \{u_0 \in H^1 \quad \text{with} \quad M[u]E[u]< \delta \quad
\text{and} \quad \|u_0\|_{L^2} \|\nabla u_0\|_{L^2} < \|Q\|_{L^2}
\|\nabla Q\|_{L^2}\}.
$$
Next define $(ME)_\crit = \sup \{\de : u_0 \in S_\de \Longrightarrow
SC(u_0) ~~\text{holds} \}$. If $(ME)_\crit = M[Q]E[Q]$, then we are
done, so we assume
\begin{equation}
 \label{assumption<}
(ME)_\crit < M[Q]E[Q].
\end{equation}
Then there exists a sequence of solutions $u_n$ to \eqref{E:NLS}
with $H^1$ initial data $u_{n,0}$ (rescale all of them to have
$\Vert u_n \Vert_{L^2} = 1$ for all $n$) such that $\|\nabla
u_{n,0}\|_{L^2} < \|Q\|_{L^2}\|\nabla Q\|_{L^2}$ and $E[u_n]
\searrow (ME)_\crit$ as $n\to +\infty$, for which
$\text{SC}(u_{n,0})$ does not hold for any $n$.

The next proposition gives the existence of an $H^1$ solution
$u_\text{c}$ to \eqref{E:NLS} with initial data $u_{\text c,0}$ such
that $\|u_\crin\|_{L^2} \|\nabla u_\crin\|_{L^2}<
\|Q\|_{L^2}\|\nabla Q\|_{L^2}$ and $M[u_\crit ]E[u_\crit ]=
(ME)_\text{c}$ for which $\text{SC}(u_\crin)$ does not hold. This
will imply that $K = \{ \, u_\crit(\cdot-x(t), t) \, | \, 0\leq t <
+\infty \, \}$ is precompact in $H^1$ (Proposition
\ref{P:crit_compact}). As a consequence (see Corollary
\eqref{C:unif_small}) we obtain that for each $\epsilon>0$, there is
an $R> 0$ such that, uniformly in $t$, we have
\begin{equation}
\label{E:localization}
\int_{|x+x(t)|>R} |\nabla u_\crit(t,x)|^2 dx \leq \epsilon.
\end{equation}
This together with the hypothesis of zero momentum (which can always
be achieved by Galilean invariance -- see \S \ref{S:zeromomentum})
provides a control on the growth of $x(t)$ (Lemma
\ref{L:x-control}).  Finally, the rigidity theorem  (Theorem
\ref{T:rigidity}), which appeals to this control on $x(t)$ and the
uniform localization \eqref{E:localization}, will lead to a
contradiction that such critical element exists (unless it is
identically zero) which will conclude the proof.

\begin{proposition}[Existence of a critical solution]
 \label{P:exist_crit}
Assume \eqref{assumption<}. Then there exists a global
$(T^*=+\infty)$ solution $u_\crit$ in $H^1$ with initial data
$u_\crin$ such that $\|u_\crin\|_{L^2}=1$,
$$
E[u_\crit] = (ME)_\crit < M[Q]E[Q],
$$
$$
\|\nabla u_\crit(t)\|_{L^2} < \|Q\|_{L^2}\|\nabla Q\|_{L^2} \quad
\text{for all } 0\leq t<+\infty,
$$
and
$$
\|u_\crit \|_{S(\dot H^{1/2})} = +\infty.
$$
\end{proposition}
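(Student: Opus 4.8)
The plan is to run the standard concentration-compactness / Kenig-Merle extraction argument, using Lemma \ref{L:pe} and Lemma \ref{C:energy_expand} applied to the sequence of near-threshold non-scattering data $u_{n,0}$. First I would normalize (as already arranged) so that $\|u_{n,0}\|_{L^2}=1$, $\|\nabla u_{n,0}\|_{L^2}<\|Q\|_{L^2}\|\nabla Q\|_{L^2}$, and $E[u_n]\searrow (ME)_\crit$. Apply the profile expansion of Lemma \ref{L:pe} to $\phi_n := u_{n,0}$: for each $M$ we get profiles $\psi^j$, time shifts $t_n^j$, space shifts $x_n^j$, and remainders $W_n^M$ with the pairwise divergence \eqref{E:pwdiverge}, the Strichartz smallness \eqref{E:remainder_small}, the $\dot H^s$ Pythagorean expansion \eqref{E:pythag} (in particular for $s=0$ and $s=1$), and, crucially, the energy Pythagorean expansion \eqref{E:en_expand}.

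The core of the argument is to show that exactly one profile is ``active'' and carries all the mass and energy. Using $s=0$ in \eqref{E:pythag} gives $1 = \sum_j \|\psi^j\|_{L^2}^2 + \|W_n^M\|_{L^2}^2 + o_n(1)$, so each profile has mass $\le 1$ and the masses sum to at most $1$. Using $s=1$ in \eqref{E:pythag} and the hypothesis $\|\nabla u_{n,0}\|_{L^2}^2 < \|Q\|_{L^2}^2\|\nabla Q\|_{L^2}^2$ (recall $\|u_{n,0}\|_{L^2}=1$) shows each profile satisfies the subthreshold gradient bound $\|\psi^j\|_{L^2}\|\nabla\psi^j\|_{L^2}<\|Q\|_{L^2}\|\nabla Q\|_{L^2}$, which by the variational characterization (as in \cite{HR2}) forces $E[e^{-it_n^j\Delta}\psi^j]\ge 0$ for all $n$ (for $j$ with $t_n^j$ bounded one uses the limit profile; for $|t_n^j|\to\infty$ one uses that $\|e^{-it_n^j\Delta}\psi^j\|_{L^4}\to0$ so the energy converges to $\tfrac12\|\nabla\psi^j\|_{L^2}^2\ge0$ — exactly the computation already appearing in the proof of Lemma \ref{C:energy_expand}). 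Likewise $E[W_n^M]\ge o_n(1)$ because its gradient is subthreshold and its $\dot H^{1/2}$ Strichartz norm, hence (by interpolation and the linear Strichartz estimate as in Step 2 of Lemma \ref{C:energy_expand}) its $L^4$ norm, is small. Feeding this into \eqref{E:en_expand} gives $(ME)_\crit + o_n(1) = E[u_n] = \sum_j E[e^{-it_n^j\Delta}\psi^j] + E[W_n^M] + o_n(1)$ with every summand asymptotically nonnegative. Combined with the product structure of the critical quantity — multiplying by the mass bounds $M[\psi^j]\le 1$ and using $M[\psi^j]E[\psi^j]\ge (ME)_\crit \cdot$ (something) requires care — the conclusion is that there cannot be two profiles each with $M[\psi^j]E[\psi^j]>0$ unless one violates the strict bound, so in the limit exactly one profile $\psi^1$ is nonzero, it has $M[\psi^1]=1$, $E[\psi^1]=(ME)_\crit$, $W_n^M\to0$ in $H^1$, and all other $\psi^j=0$. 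I would present this as: if all profiles had $M[\psi^j]E[\psi^j]<(ME)_\crit$ then each $\text{SC}(\psi^j)$ holds, and the long-time perturbation / nonlinear profile reconstruction lemma of \cite{HR2} would give $\|u_n\|_{S(\dot H^{1/2})}<\infty$ for large $n$, contradicting that $\text{SC}(u_{n,0})$ fails.

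Having isolated the single profile $\psi^1$, define $u_\crit$ to be the nonlinear solution with data $e^{-it_n^1\Delta}\psi^1$ (after passing to the limit: if $t_n^1\to t^*$ finite, replace $\psi^1$ by $e^{-it^*\Delta}\psi^1$ so $t_n^1=0$; if $|t_n^1|\to\infty$, one instead defines $u_\crit$ via the final/initial scattering state and uses the existence of wave operators, again as in \cite{HR2}). Then $\|u_{\crit,0}\|_{L^2}=1$, $E[u_\crit]=(ME)_\crit$, the gradient bound $\|\nabla u_\crit(t)\|_{L^2}<\|Q\|_{L^2}\|\nabla Q\|_{L^2}$ holds for all $t$ (this is Theorem 1.1(1) of \cite{HR2}, which also yields $T^*=+\infty$), and $\|u_\crit\|_{S(\dot H^{1/2})}=+\infty$: otherwise $\text{SC}(u_{\crit,0})$ holds, and then the perturbation lemma would transfer scattering back to $u_n$ for large $n$, the contradiction again.

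The main obstacle is the bookkeeping in the product $M[u]E[u]$ when passing through the two separate Pythagorean expansions — mass is exactly additive and energy is only asymptotically additive with nonnegative summands, so one must argue that the full mass concentrates on the same single profile that carries the full energy. The clean way is: each profile is subthreshold in the mass-gradient sense, hence $M[\psi^j]E[\psi^j]\ge 0$ and in fact, by the sharp Gagliardo-Nirenberg / variational lemma of \cite{HR2}, $M[\psi^j]E[\psi^j]<M[Q]E[Q]$; summing and using $M[\psi^j]\le 1$ one gets $(ME)_\crit \ge \sum_j M[\psi^j]E[\psi^j]\ge (\sum_j M[\psi^j]) \cdot \min_j E[\psi^j]$-type inequalities are false in general, so instead one shows that if at least one $j_0$ has $E[\psi^{j_0}]<(ME)_\crit$ then \emph{all} $\psi^j$ are subcritical for $\text{SC}$ and the approximation argument closes; the only way the induction terminates is with a single profile saturating the threshold. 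I would also need the nonlinear-profile long-time perturbation lemma from \cite{HR2} as a black box, and the small-data theory to handle the tail $\sum_{j>M_0}$ of profiles with tiny mass-energy. None of these steps is new relative to \cite{HR2}; the non-radial content is entirely in the already-proven Lemmas \ref{L:pe} and \ref{C:energy_expand}, so the proof here is a transcription of the radial argument with the space-translation parameters $x_n^j$ carried along inertly.
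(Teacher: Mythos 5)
Your proposal reproduces the standard Kenig--Merle extraction argument from \cite[Prop.\ 5.4]{HR2}, using the non-radial profile decomposition (Lemma \ref{L:pe}) and energy Pythagorean expansion (Lemma \ref{C:energy_expand}) of this paper in place of their radial counterparts, which is exactly what the paper does (it simply cites \cite{HR2}). The bookkeeping you flag --- the non-radial content entering only through the inert translation parameters $x_n^j$, each profile inheriting the sub-threshold mass-gradient bound via \eqref{E:pythag}, and the dichotomy forcing a single profile with $M[\psi^1]=1$ and $E[\psi^1]=(ME)_\crit$ via the perturbation lemma --- is handled correctly and matches the intended argument.
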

\begin{proof}
The proof closely follows the proof of \cite[Prop 5.4]{HR2}.
\end{proof}

\begin{proposition}[Precompactness of the flow of the critical solution]
 \label{P:crit_compact}
With $u_\crit$ as in Proposition \ref{P:exist_crit}, there exists a
continuous path $x(t)$ in $\mathbb{R}^3$ such that
$$
K = \{ \, u_\crit(\cdot -x(t),t) \, | \, t\in [0,+\infty) \, \} \subset
H^1
$$
is precompact in $H^1$ (i.e., $\bar{K}$ is compact).
\end{proposition}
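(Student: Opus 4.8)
The plan is to run the standard concentration-compactness argument: apply the non-radial profile decomposition (Lemma \ref{L:pe}) to a suitably chosen sequence of time slices of $u_\crit$ and show that only one profile survives, with the time-shift parameter bounded, which forces precompactness of the translated flow. Concretely, I would argue by contradiction. Suppose $K$ is not precompact for any choice of continuous path $x(t)$. Pick a sequence of times $t_n$ (which, by global existence and the fact that $\|u_\crit\|_{S(\dot H^{1/2})} = +\infty$, one may take with $t_n \to +\infty$, or $t_n \to -\infty$, splitting the nonscattering on $[0,\infty)$ from that on $(-\infty,0]$ as in \cite{HR2}), and set $\phi_n = u_\crit(t_n)$. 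This is uniformly bounded in $H^1$ since $\|u_\crit(t_n)\|_{L^2} = 1$ and $\|\nabla u_\crit(t_n)\|_{L^2} < \|Q\|_{L^2}\|\nabla Q\|_{L^2}$. Apply Lemma \ref{L:pe} to get profiles $\psi^j$, parameters $t_n^j, x_n^j$, and remainders $W_n^M$.

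Next I would show that exactly one profile is nonzero and that its time shift is bounded. This is where the mass–energy threshold enters. From the mass Pythagorean expansion \eqref{E:pythag} at $s=0$ one gets $\sum_j M[\psi^j] \le M[u_\crit] = 1$, and from the energy expansion (Lemma \ref{C:energy_expand}) together with \eqref{E:pythag} at $s=1$ and the variational characterization of the threshold (the sharp Gagliardo-Nirenberg inequality, exactly as in \cite{HR2}), one shows each nonlinear profile (evolved by NLS with data $e^{-it_n^j\Delta}\psi^j$, or its nonlinear limit) satisfies the subthreshold condition, so $M[\psi^j]E[\psi^j_{\text{lin/nonlin}}] \ge 0$ and $\sum_j M[\psi^j] E_j \le (ME)_\crit$. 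If there were two or more nonzero profiles, each would have mass–energy strictly below $(ME)_\crit$, hence $SC$ would hold for each; using the perturbation/long-time stability theory of \cite{HR2} to sum the nonlinear profiles and absorb $W_n^M$ (whose free evolution is small in $S(\dot H^{1/2})$ by \eqref{E:remainder_small}), one would conclude $\|u_\crit\|_{S(\dot H^{1/2})} < \infty$, a contradiction. Hence there is a single profile $\psi^1$, with $W_n^1 \to 0$ in $\dot H^{1/2}$ and, after using the energy/mass expansions to upgrade, in $H^1$. Similarly, if $t_n^1 \to \pm\infty$ one could approximate $u_\crit$ near $t_n$ by a scattering solution and again contradict nonscattering; so $t_n^1$ is bounded and, passing to a subsequence, converges, and can be absorbed into $\psi^1$. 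We are left with $u_\crit(t_n)(\cdot + x_n^1) \to \psi^1$ strongly in $H^1$ along the subsequence.

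Finally I would upgrade this sequential statement, along an arbitrary sequence $t_n$, to the existence of a single continuous $x(t)$ with the stated precompactness. The sequential compactness already shows that $\{u_\crit(\cdot + x_n, t_n)\}$ is relatively compact in $H^1$ for appropriately chosen translations; a now-standard argument (see \cite{KM06a,KM06b}) produces a function $x(t)$: for each $t$ choose $x(t)$ so that $u_\crit(\cdot+x(t),t)$ lies in a fixed compact set, and then show $x(t)$ may be taken continuous by a covering/selection argument exploiting the continuity of $t \mapsto u_\crit(t)$ in $H^1$ and the fact that translations moving the ``bulk'' of a fixed nonzero $H^1$ function by a large amount change it appreciably in $H^1$. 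This gives the claimed precompact set $K$. The main obstacle is the step isolating a single profile with bounded time shift: it requires careful bookkeeping with the nonlinear profiles and the long-time perturbation lemma of \cite{HR2} in the non-radial setting, and it is precisely here that one must know the energy Pythagorean expansion (Lemma \ref{C:energy_expand}) holds without the radial compact embedding — which is exactly why that lemma was reproved above.
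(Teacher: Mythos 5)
The core of your argument---apply the non-radial profile decomposition to $\phi_n = u_\crit(t_n)$, use the mass and energy Pythagorean expansions together with the sub-threshold condition and long-time perturbation theory to rule out multiple profiles and unbounded time shifts, and conclude strong $H^1$ convergence of $u_\crit(\cdot - x_n^1, t_n)$ along a subsequence---is exactly the route taken in the paper, which simply refers this part to \cite[Prop.\ 5.5]{HR2}. You also correctly identify that the non-radial energy Pythagorean expansion (Lemma~\ref{C:energy_expand}) is the key replacement for the radial compact embedding.

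Where the proposal is incomplete is the passage from this sequential statement to the existence of a \emph{single continuous} path $x(t)$ making $K$ precompact. You describe this as ``a now-standard covering/selection argument,'' but this is precisely the part the paper handles non-trivially in Appendix~\ref{A:lifting}. The precise chain there is: (i) the quotient $G\backslash H^1$ by spatial translations is a \emph{complete} metric space with $d([\phi],[\psi]) = \inf_{x_0}\|\phi(\cdot-x_0)-\psi\|_{H^1}$, which requires showing orbits are closed; (ii) if the projected orbit $\pi(\{u_\crit(t)\})$ is not precompact, completeness yields a sequence $t_n$ with $\inf_{x_0}\|u(\cdot-x_0,t_n)-u(\cdot,t_{n'})\|_{H^1} \ge \eta$---this is what licenses the contradiction setup and produces the $t_n$ to feed into the profile decomposition, a point your proposal leaves vague (``pick a sequence of times $t_n$''; the appeal to $\|u_\crit\|_{S(\dot H^{1/2})}=\infty$ does not by itself produce the sequence); (iii) a lifting lemma (Lemma~\ref{L:lifting}) showing that a precompact subset of $G\backslash H^1$ with a uniform $H^1$ lower bound lifts to a precompact subset of $H^1$; and (iv) a construction of $x(t)$ by choosing $x(t_n)$ on a suitably dense time grid and interpolating linearly, with the uniform continuity of $t\mapsto u_\crit(t)$ on compact time intervals used to verify that the resulting $K$ is precompact (Lemma~\ref{L:cont-path}). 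None of this is automatic from the sequential compactness you establish, and your sketch does not address, e.g., why a \emph{single} compact set $\tilde K\subset H^1$ can be chosen so that $u_\crit(\cdot-x(t),t)\in \tilde K$ for \emph{all} $t$, nor why the interpolated $x(t)$ keeps the orbit precompact. So: same approach, but the continuity-of-$x(t)$ step is a genuine gap that the paper fills with Appendix~\ref{A:lifting}.
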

\begin{proof}
For convenience, we write $u=u_\crit$. We argue by contradiction. By
the arguments in Appendix \ref{A:lifting}, we can assume that there
exists $\eta>0$ and a sequence $t_n$ such that for all $n \neq n'$,
\begin{equation}
\label{E:negation} \inf_{x_0\in \mathbb{R}^3}
\|u(\cdot-x_0,t_n)-u(\cdot,t_{n'})\|_{H^1} \geq \eta.
\end{equation}
Take $\phi_n=u(t_n)$ in the profile expansion lemma (Lemma
\ref{L:pe}).  The remainder of the argument closely follows the
proof of \cite[Prop 5.5]{HR2}.
\end{proof}

\begin{corollary}[Precompactness of the flow implies uniform localization]
 \label{C:unif_small}
Let $u$ be a solution to \eqref{E:NLS} such that
$$
K = \{ \, u(\cdot-x(t), t) \, | \, t\in [0,+\infty) \, \}
$$
is precompact in $H^1$.  Then for each $\epsilon>0$, there exists
$R> 0$ so that
$$
\int_{|x+x(t)|> R} |\nabla u(x,t)|^2 + |u(x,t)|^2 +|u(x,t)|^4\, dx \leq
\epsilon, \quad \text{for all }0\leq t <+\infty.
$$
\end{corollary}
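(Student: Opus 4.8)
The plan is to argue by contradiction, exploiting that a precompact subset of $H^1$ is ``uniformly tight'' (here in the $H^1\cap L^4$ sense). Suppose the conclusion fails for some $\epsilon>0$. Then for every positive integer $n$ there is a time $t_n\in[0,+\infty)$ with
\[
\int_{|x+x(t_n)|>n}\bigl(|\grad u(x,t_n)|^2+|u(x,t_n)|^2+|u(x,t_n)|^4\bigr)\,dx>\epsilon.
\]
Writing $v_n(y)=u(y-x(t_n),t_n)$ and substituting $y=x+x(t_n)$ (so $dy=dx$), this becomes
\[
\int_{|y|>n}\bigl(|\grad v_n(y)|^2+|v_n(y)|^2+|v_n(y)|^4\bigr)\,dy>\epsilon\qquad\text{for every }n.
\]

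Next I would invoke the precompactness hypothesis: the sequence $\{v_n\}_n$ lies in $K$, so after passing to a subsequence $v_n\to v$ strongly in $H^1$ for some $v\in H^1(\cR^3)$. By the three-dimensional Sobolev embedding $H^1(\cR^3)\hookrightarrow L^4(\cR^3)$ the convergence also holds in $L^4$, and hence the densities converge in $L^1$:
\[
|\grad v_n|^2+|v_n|^2+|v_n|^4\ \longrightarrow\ |\grad v|^2+|v|^2+|v|^4\qquad\text{in }L^1(\cR^3).
\]
Since the right-hand side is a fixed integrable function, I can fix $R_0>0$ with $\int_{|y|>R_0}\bigl(|\grad v|^2+|v|^2+|v|^4\bigr)\,dy<\epsilon/2$; combining this with the $L^1$ convergence just displayed yields an $N$ such that $\int_{|y|>R_0}\bigl(|\grad v_n|^2+|v_n|^2+|v_n|^4\bigr)\,dy<\epsilon$ for all $n\ge N$. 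Taking $n\ge\max(N,R_0)$ and using the inclusion $\{|y|>n\}\subset\{|y|>R_0\}$ then contradicts the lower bound above. This contradiction establishes the corollary.

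The argument is entirely soft, so there is no genuine obstacle; the only point that requires a moment's care is the quartic term $|u|^4$, which is precisely why strong $H^1$ convergence must be upgraded to $L^4$ convergence via the embedding $H^1\hookrightarrow L^4$ in $\cR^3$ (equivalently, interpolation between $L^2$ and $L^6$). For the gradient and mass terms alone, strong $H^1$ convergence would already suffice. Note also that the continuity of the path $x(t)$ plays no role in this step—only that the translated orbit $K$ is precompact in $H^1$.
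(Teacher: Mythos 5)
Your proof is correct and follows essentially the same contradiction argument as the paper: negate, change variables so that the translated sequence lies in $K$, extract a strongly $H^1$-convergent subsequence, upgrade to $L^4$ via Sobolev, and contradict tightness of the limit. The only cosmetic difference is that the paper derives the contradiction by showing every tail integral of the limit $\phi$ is at least $\epsilon$ (impossible for $\phi\in H^1$), whereas you phrase it via $L^1$ convergence of the densities plus a tightness cutoff $R_0$; the two are interchangeable.
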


\begin{proof}
If not, then there exists $\epsilon>0$ and a sequence of times $t_n$
such that
$$
\int_{|x+x(t_n)|>n} |\nabla u(x,t_n)|^2 + |u(x,t_n)|^2 +|u(x,t_n)|^4\, dx \geq
\epsilon,
$$
or, by changing variables,
\begin{equation}
\label{int>epsilon}
\int_{|x|>n} |\nabla u(x - x(t_n),t_n)|^2 + |u(x - x(t_n),t_n)|^2+ |u(x - x(t_n),t_n)|^4\,
dx \geq \epsilon.
\end{equation}
Since $K$ is precompact, there exists $\phi \in H^1$ such that,
passing to a subsequence of $t_n$, we have  $u(\cdot - x(t_n),t_n)
\to \phi$ in $H^1$.  By \eqref{int>epsilon}
$$
\forall R>0,\quad \int_{|x|>R} |\nabla \phi(x)|^2 + |\phi(x)|^2 +|\phi(x)|^4\geq \epsilon,
$$
which is a contradiction with the fact that $\phi\in H^1$.
\end{proof}

\section{Zero momentum of the critical solution}
\label{S:zeromomentum}
\begin{proposition}
 \label{P:zeromomentum}
Assume \eqref{assumption<} and let $u_\crit$ be the critical
solution constructed in Section \ref{S:outline}. Then its conserved
momentum $P[u_\crit]= \I \int \bar u_\crit \nabla u_\crit \, dx$ is
zero.
\end{proposition}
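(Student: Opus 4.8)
The plan is to argue by contradiction, using the Galilean invariance of \eqref{E:NLS} together with the finiteness of the mass of $u_\crit$ (recall $\|u_\crit\|_{L^2}=1$). Suppose $P[u_\crit]\neq 0$ and set $\xi_0\defeq-P[u_\crit]$. Define
\[
v(x,t)\defeq e^{i(x\cdot\xi_0-t|\xi_0|^2)}\,u_\crit(x-2t\xi_0,t),\qquad v_0(x)\defeq e^{ix\cdot\xi_0}\,u_\crin(x).
\]
A direct computation shows that $v$ solves \eqref{E:NLS} with $v(\cdot,0)=v_0\in H^1$, so by uniqueness $v$ is the $H^1$ solution generated by $v_0$.

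First I would record how the invariants transform under this boost. Since $|v_0|=|u_\crin|$ pointwise, one has $M[v]=M[u_\crit]=1$, and
\[
\|\nabla v_0\|_{L^2}^2=\|\nabla u_\crin\|_{L^2}^2+2\,\xi_0\cdot P[u_\crit]+|\xi_0|^2=\|\nabla u_\crin\|_{L^2}^2-|P[u_\crit]|^2\leq\|\nabla u_\crin\|_{L^2}^2,
\]
\[
E[v]=E[u_\crit]+\xi_0\cdot P[u_\crit]+\frac12|\xi_0|^2=E[u_\crit]-\frac12|P[u_\crit]|^2 .
\]
Hence $\|v_0\|_{L^2}\|\nabla v_0\|_{L^2}\leq\|u_\crin\|_{L^2}\|\nabla u_\crin\|_{L^2}<\|Q\|_{L^2}\|\nabla Q\|_{L^2}$ and $M[v]E[v]=E[v]<(ME)_\crit<M[Q]E[Q]$; in particular, by Theorem 1.1 of \cite{HR2}, $v$ is a global $H^1$ solution, and $v_0\in S_\delta$ for every $\delta\in(E[v],(ME)_\crit]$.

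Next I would show that $v$ does not scatter, which comes down to the Galilean invariance of $H^1$-scattering. The boost sends free solutions to free solutions: for $\phi\in H^1$,
\[
\big(e^{it\Delta}(e^{ix\cdot\xi_0}\phi)\big)(x)=e^{i(x\cdot\xi_0-t|\xi_0|^2)}\,(e^{it\Delta}\phi)(x-2t\xi_0).
\]
Since multiplication by $e^{ix\cdot\xi_0}$ and translation in $x$ are bounded operations on $H^1$ with norms depending only on $|\xi_0|$, this yields $\|v(t)-e^{it\Delta}(e^{ix\cdot\xi_0}\phi_\pm)\|_{H^1}\leq C(|\xi_0|)\,\|u_\crit(t)-e^{it\Delta}\phi_\pm\|_{H^1}$, so $v$ scatters in $H^1$ if and only if $u_\crit$ does (apply the same estimate with $-\xi_0$ for the reverse implication, since boosting $v$ by $-\xi_0$ recovers $u_\crit$). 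Now $\|u_\crit\|_{S(\dot H^{1/2})}=+\infty$ by Proposition \ref{P:exist_crit}, so by Proposition 2.2 of \cite{HR2} (and its elementary converse) $u_\crit$ does not scatter in $H^1$; therefore neither does $v$, and since $v$ is global this means $\|v\|_{S(\dot H^{1/2})}=+\infty$, i.e., $\textnormal{SC}(v_0)$ fails.

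To finish, pick $\delta$ with $E[v]<\delta<(ME)_\crit$, which is possible because $E[v]=(ME)_\crit-\frac12|P[u_\crit]|^2<(ME)_\crit$. Then $v_0\in S_\delta$ while $\textnormal{SC}(v_0)$ fails, contradicting the definition of $(ME)_\crit$: the set of $\delta'$ for which $u_0\in S_{\delta'}\Rightarrow\textnormal{SC}(u_0)$ is an interval with supremum $(ME)_\crit$, hence contains every $\delta'<(ME)_\crit$. This contradiction forces $P[u_\crit]=0$. The only non-routine point I anticipate is the equivalence ``$u_\crit$ scatters in $H^1$ if and only if $v$ scatters in $H^1$'', i.e., the Galilean invariance of $H^1$-scattering; the transformation laws for $M[\cdot]$, $E[\cdot]$, $P[\cdot]$ and $\|\nabla\,\cdot\,\|_{L^2}$ follow immediately from the explicit formula for $v$, and this argument is available here---in contrast to the energy-critical setting---precisely because finite mass permits the normalization $\xi_0=-P[u_\crit]/M[u_\crit]$.
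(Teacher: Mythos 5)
Your proof is correct and follows essentially the same approach as the paper: apply a Galilean boost with $\xi_0=-P[u_\crit]/M[u_\crit]$ (here $=-P[u_\crit]$ since $M[u_\crit]=1$), check that the boost strictly lowers $M\cdot E$ and the mass--gradient product while preserving non-scattering, and contradict the minimality of $(ME)_\crit$. The one place you take a longer route is in showing $\|v\|_{S(\dot H^{1/2})}=\infty$: you pass through $H^1$ scattering and its Galilean invariance, whereas the paper observes directly that $|v(x,t)|=|u_\crit(x-2\xi_0 t,t)|$, so every space-time Lebesgue norm (hence $S(\dot H^{1/2})$) is unchanged by the boost; this is a small simplification worth knowing.
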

\begin{proof}
Consider for some $\xi_0\in \mathbb{R}^3$ the transformed
solution
$$
w_c(x,t) = e^{ix\cdot\xi_0} e^{-it|\xi_0|^2} u_{\crit}(x-2\xi_0t,t).
$$
We compute
$$
\|\nabla w_c\|_{L^2}^2 = |\xi_0|^2M[u_{\crit}] + 2\xi_0\cdot
P[u_{\crit}] + \|\nabla u_{\crit}\|_{L^2}^2.
$$
Observe that $M[w_c]=M[u_{\crit}]$ and
$$
E[w_c] = \frac12|\xi_0|^2 M[u_{\crit}] + \xi_0 \cdot P[u_{\crit}] + E[u_{\crit}].
$$
To minimize $E[w_c]$, we take $\xi_0 = -P[u_{\crit}]/M[u_{\crit}]$.

Assume $P[u_c] \neq 0$. Choose $\xi_0 = -\frac{P[u_c]}{M[u_c]}$.
Then $P[w_c] = 0$ and
\begin{equation}
 \label{E:zeromomentumprop}
M[w_c]=M[u_c], ~~ E[w_c] = E[u_c] - \frac12 \,
\frac{P[u_c]^2}{M[u_c]}, ~~ \|\nabla w_c\|^2_{L^2} = \|\nabla u_c
\|^2_{L^2} - \frac{P[u_c]^2}{M[u_c]}.
\end{equation}
Thus, $M[w_c]E[w_c]<M[u_c]E[u_c]$, $\|w_c\|_{L^2}\|\nabla
w_c\|_{L^2}<\|Q\|_{L^2}\|\nabla Q\|_{L^2}$. By Proposition
\ref{P:exist_crit}, $\|u_c\|_{S(\dot{H}^{1/2})}=+\infty$, and hence,
$\|w_c\|_{S(\dot{H}^{1/2})}=+\infty$, which contradicts the
definition of $u_c$.
\end{proof}

\section{Control of the spatial translation parameter}
\label{S:x-control}

Observe that
\begin{equation}
 \label{E:Dmomentum}
\frac{\partial}{\partial t} \int x |u(x, t)|^2 \, dx = 2 \im \int
\bar{u} \, \nabla u \, dx  = 2 P[u].
\end{equation}

Since $P[u_\crit]=0$ (see Prop. \ref{P:zeromomentum}), it follows
that $\ds \int x |u_\crit(x, t)|^2 \, dx = \text{const}$, provided
it is finite.  We will replace this identity with a version
localized to a suitably large radius $R>0$.  Provided the
localization $R$ is taken large enough over an interval $[t_0,t_1]$
to envelope the entire path $x(t)$ over $[t_0,t_1]$, we can exploit
the localization of $u_\crit$ in $H^1$ around $x(t)$ (induced by the
precompactness of the translated flow $u_\crit(\cdot-x(t),t)$) and
the zero-momentum property to prove that the localized center of
mass is \emph{nearly} conserved.  The parameter $x(t)$ is then
constrained from diverging too quickly to $+\infty$ by the
localization of $u_\crit$ in $H^1$ around $x(t)$ and the near
conservation of localized center of mass. We refer to \cite[Lemma
5.5]{KM06b} for a similar proof in the case of the energy-critical
non-radial wave equation.

\begin{lemma}
 \label{L:x-control}%
Let $u$ be a solution of \eqref{E:NLS} defined on $[0,+\infty)$ such
that $P[u]= 0$ and $K = \{ u(\cdot - x(t),t) | \, t \in [0,\infty)
\}$ is precompact in $H^1$, for some continuous function $x(\cdot)$.
Then
\begin{equation}
 \label{E:x-control}
\frac{x(t)}{t} \to 0 \quad \text{as} ~~ t \to +\infty.
\end{equation}
\end{lemma}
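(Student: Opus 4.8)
The plan is to adapt the localized center-of-mass argument of Kenig--Merle \cite[Lemma 5.5]{KM06b} to our setting. After rescaling so that $M[u]=1$, I would fix $\psi\in C_c^\infty(\mathbb{R}^3;\mathbb{R}^3)$ with $\psi(y)=y$ for $|y|\le 1$ and with $|\psi|\le 2$, $|\nabla\psi|$ bounded on $\mathbb{R}^3$, set $\psi_R(x)=R\,\psi(x/R)$, and study the vector-valued functional $z_R(t)=\int \psi_R(x)\,|u(x,t)|^2\,dx$, which satisfies $|z_R(t)|\le 2R$. Differentiating and using the local mass identity behind \eqref{E:Dmomentum}, namely $\partial_t|u|^2=-2\,\nabla\!\cdot\!\im(\bar u\nabla u)$, gives $z_R'(t)=2\int(\nabla\psi)(x/R)\cdot\im(\bar u\nabla u)\,dx$; since $(\nabla\psi)(x/R)$ equals the identity for $|x|\le R$ and $P[u]=0$, the bulk term cancels and one is left with the tail bound $|z_R'(t)|\le C\int_{|x|>R}\big(|u(x,t)|^2+|\nabla u(x,t)|^2\big)\,dx$.

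Next, given $\eps>0$ I would take the radius $R_0=R(\eps)$ from Corollary \ref{C:unif_small}, so that $\int_{|x+x(t)|>R_0}(|\nabla u|^2+|u|^2)\le\eps$ for all $t$. Fixing $T>0$ and setting $S_T=\sup_{0\le s\le T}|x(s)|<\infty$ and $R=R_0+S_T$, the condition $|x|>R$ forces $|x+x(s)|>R_0$ for every $s\in[0,T]$, so $|z_R'(s)|\le C\eps$ on $[0,T]$ and hence $|z_R(t_1)-z_R(0)|\le C\eps\,t_1$ for $t_1\in[0,T]$ (near-conservation of the localized center of mass). Separately, splitting the integral defining $z_R(t)$ over $\{|x+x(t)|\le R_0\}$ --- where $|x|\le R$ so $\psi_R(x)=x$ --- and its complement, and using $\int_{|x+x(t)|>R_0}|u|^2\le\eps$ and $M[u]=1$, I would get the lower bound $|z_R(t)+x(t)|\le C(R_0+\eps|x(t)|+\eps R)$ on $[0,T]$. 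Combining the two estimates at times $0$ and $t_1$ and choosing $\eps$ below a universal threshold to absorb the $\eps|x(t_1)|$ term gives $|x(t_1)|\le C(R_0+|x(0)|+\eps R+\eps t_1)=C(R_0+|x(0)|+\eps(R_0+S_T)+\eps t_1)$; taking the supremum over $t_1\le T$ and absorbing $\eps S_T$ yields $S_T\le C(R_0+|x(0)|+\eps T)$ for all $T$, with $C$ universal. Therefore $\limsup_{T\to\infty}|x(T)|/T\le\limsup_{T\to\infty}S_T/T\le C\eps$, and since $\eps>0$ is arbitrary, $x(t)/t\to 0$.

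The step I expect to be the main obstacle is reconciling the fact that $z_R$ is centered at the origin whereas $u(t)$ concentrates near $-x(t)$: this is what forces $R$ to grow with $\sup_{[0,T]}|x(s)|$ and makes the resulting inequality for $S_T$ self-referential. Closing it hinges on the error constants in the tail bound for $z_R'$, in the lower bound for $z_R+x(t)$, and in the absorption of the $\eps S_T$ term all being \emph{independent of $\eps$, of $T$, and of the solution}; keeping careful track of these universal constants --- and of the harmless fixed contribution $|x(0)|$ --- is the delicate part, everything else being routine computation with the local conservation laws and Corollary \ref{C:unif_small}.
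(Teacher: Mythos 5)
Your proposal is correct and uses the same core device as the paper: a truncated center--of--mass $z_R(t)=\int \phi_R(x)|u(x,t)|^2\,dx$ with $\phi_R(x)=x$ for $|x|\le R$, the tail bound $|z_R'|\lesssim\int_{|x|\ge R}(|\nabla u|^2+|u|^2)$ coming from $P[u]=0$, the uniform localization of $u$ near $-x(t)$ from Corollary~\ref{C:unif_small}, and the choice $R=R_0(\eps)+\sup|x|$ over the time window so that both the near--conservation of $z_R$ and the lower bound $|z_R(t)|\ge |x(t)|M[u]-O(R_0+\eps R)$ kick in.

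The difference is purely in how the endgame is organized. The paper argues by contradiction, assuming $|x(t_n)|/t_n\ge\eps_0$ along a sequence, and then uses a first--exit--time device: $\tilde t_n=\inf\{t:|x(t)|\ge R_n\}$ with $R_n=|x(t_n)|$, so that on $[0,\tilde t_n]$ one has simultaneously $\sup_{[0,\tilde t_n]}|x|=R_n$ and $|x(\tilde t_n)|=R_n$. This ties the radius $\tilde R_n=R_n+R_0$ to the value of $|x|$ at the final time and yields a non--self--referential inequality $\eps_0(M[u]-5\eps)\le 5\eps + 2R_0(\eps)M[u]/\tilde t_n$ that is immediately contradicted as $n\to\infty$. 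You instead work directly on $[0,T]$ with $S_T=\sup_{[0,T]}|x|$, and the resulting inequality $S_T\le C(|x(0)|+R_0(\eps)+\eps R_0(\eps)+\eps S_T+\eps T)$ is self--referential; you close it by absorbing the $\eps S_T$ term, which requires $\eps$ below a universal threshold. Both routes are valid. Your version is direct (no contradiction hypothesis, handles $x(0)\neq 0$ without normalization) at the price of tracking the absorption constant; the paper's exit--time trick eliminates the absorption step entirely, which is the reason it is phrased as a contradiction argument. You correctly flagged the self--referential nature of the estimate as the delicate point, and your resolution of it is sound.
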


\begin{proof}
Assume that \eqref{E:x-control} does not hold. Then there exists a
sequence $t_n \to +\infty$ such that $|x(t_n)|/t_n \geq \epsilon_0$
for some $\epsilon_0 > 0 $.  Without loss of generality we may assume
$x(0)=0$.  For $R>0$, let
$$t_0(R) = \inf \{t \geq 0: |x(t)| \geq R \} \,,$$
i.e., $t_0(R)$ is the first time when $x(t)$ reaches the boundary of
the ball of radius $R$.  By continuity of $x(t)$, the value $t_0(R)$
is well-defined.  Moreover, the following properties hold: (1)
$t_0(R) > 0$; (2) $|x(t)| < R$ for $0 \leq t < t_0(R)$; and (3)
$|x(t_0(R))| = R$.

Define $R_n = |x(t_n)|$ and $\tilde t_n = t_0(R_n)$. Note that $t_n
\geq \tilde t_n$, which combined with $|x(t_n)|/t_n\geq \epsilon_0$
gives $R_n/ \tilde t_n \geq \epsilon_0$.  Since $t_n \to +\infty$
and $|x(t_n)|/t_n \geq \epsilon_0$, we have $R_n=|x(t_n)| \to
+\infty$.  Thus, $\tilde t_n = t_0(R_n)\to +\infty$.  At this point,
we can forget about $t_n$; we will work on the time interval
$[0,\tilde t_n]$ and the only data that we will use in the remainder
of the proof is:
\begin{enumerate}
\item for $0\leq t< \tilde t_n$, we have $|x(t)|<R_n$;
\item $|x(\tilde t_n)| = R_n$;
\item $\dfrac{R_n}{\tilde t_n} \geq \epsilon_0$ and $\tilde t_n \to +\infty$.
\end{enumerate}

By the precompactness of $K$ and Corollary \ref{C:unif_small}, it follows that for any $\ep > 0$ there
exists $R_0(\ep)\geq 0$ such that for any $t \geq 0$,
\begin{equation}
 \label{E:compact1}
\int\limits_{|x+x(t)| \geq R_0(\ep)} \left( |u|^2 + |\nabla u|^2
\right) \, dx \leq \ep.
\end{equation}
We will select $\epsilon>0$ appropriately later.

For $x\in \mathbb{R}$, let $\theta(x)\in C_c^\infty(\mathbb{R})$ be
such that $\theta(x) = x$, for $-1\leq x \leq 1$, $\theta(x)=0$ for
$|x|\geq 2^{1/3}$, $|\theta(x)|\leq |x|$, $\|\theta'\|_\infty \leq
4$, and $\|\theta\|_\infty\leq 2$.  For $x=(x_1,x_2,x_3)\in
\mathbb{R}^3$, let $\phi(x) =
(\theta(x_1),\theta(x_2),\theta(x_3))$.  Then $\phi(x)=x$ for
$|x|\leq 1$ and $\|\phi\|_{\infty}\leq 2$.
For $R>0$, set $\phi_R(x) = R\phi (x/R)$. Let $z_R: \mathbb{R}\to
\mathbb{R}^3$ be the truncated center of mass given by
$$
z_R(t) = \int \phi_R(x) \, |u(x,t)|^2 \, dx \, .
$$
Then $z'_R(t) = ([z'_R(t)]_1, [z'_R(t)]_2, [z'_R(t)]_3)$, where
$$
[z'_R(t)]_j = 2 \im \int \theta'(x_j/R)\, \partial_j u \; \bar{u}
\, dx.
$$
Note that $\theta'(x_j/R)=1$ for $|x_j|\leq 1$.  By the zero momentum property,
$$
\im \int_{|x_j|\leq R} \partial_ju \, \bar{u} = - \im \int_{|x_j|> R} \partial_ju \, \bar{u},
$$
and thus,
$$
[z_R'(t)]_j = -2 \im \int_{|x_j|\geq R} \, \partial_j u \; \bar{u}
\, dx + 2 \im \int_{|x_j|\geq R} \theta'(x_j/R)\partial_j u \;
\bar{u} \, dx \,,
$$
from which we obtain by Cauchy-Schwarz,
\begin{equation}
\label{E:z-prime-bound}
|z_R'(t)| \leq 5 \int_{|x|\geq R} (|\nabla u|^2 + |u|^2) \,.
\end{equation}

Set $\tilde R_n = R_n + R_0(\ep)$. Note that for $0 \leq t \leq
 \tilde t_n$ and $|x|>\tilde R_n$, we have $|x+x(t)| \geq \tilde R_n -
R_n = R_0(\ep)$, and thus, \eqref{E:z-prime-bound} and
\eqref{E:compact1} give
\begin{equation}
 \label{E:derivative-z}
|z'_{\tilde R_n}(t)| \leq 5 \, \ep.
\end{equation}

Now we obtain an upper bound for $z_{\tilde R_n}(0)$ and a lower bound for $z_{\tilde R_n}(t)$.
$$z_{\tilde R_n}(0) = \int_{|x|< R_0(\ep)} \phi_{\tilde R_n}(x) \,
|u_0(x)|^2 \, dx + \int_{|x+x(0)| \geq R_0(\ep)} \phi_{\tilde
R_n}(x) |u_0(x)|^2 \, dx \, ,
$$
and hence, by \eqref{E:compact1}, we have
\begin{equation}
\label{E:z0}
|z_{\tilde R_n}(0)| \leq R_0(\ep) M[u] + 2\tilde R_n \, \ep.
\end{equation}
For $0 \leq t \leq \tilde t_n$, we split $z_{\tilde R_n}(t)$ as
\begin{align*}
z_{\tilde R_n}(t) &= \int_{|x+x(t)| \geq R_0(\ep)} \phi_{\tilde
R_n}(x) \, |u(x,t)|^2 \, dx + \int_{|x+x(t)| \leq R_0(\ep)}
\phi_{\tilde R_n} (x) \, |u(x,t)|^2 \, dx. \\
&= \text{I} + \text{II}
\end{align*}
To estimate I, we note that $|\phi_{\tilde R_n} (x)| \leq 2\tilde
R_n$ and use \eqref{E:compact1} to obtain $|\text{I}| \leq 2\tilde
R_n\epsilon$. For II, we first note that $|x| \leq |x+x(t)| + |x(t)|
\leq R_0(\epsilon) + R_n = \tilde R_n$, and thus $\phi_{\tilde
R_n}(x) = x$.  We now rewrite II as
\begin{align*}
\text{II} &=
\int_{|x+x(t)| \leq R_0(\ep)} (x+x(t)) \, |u(x,t)|^2 \, dx -
x(t) \int_{|x+x(t)| \leq R_0(\ep)} |u(x,t)|^2 \, dx\\
&= \int_{|x+x(t)| \leq R_0(\ep)} (x+x(t)) \, |u(x,t)|^2 \, dx - x(t)M[u] + x(t)\int_{|x+x(t)| \geq R_0(\ep)} \, |u(x,t)|^2 \, dx\\
&= \text{IIA} + \text{IIB} + \text{IIC}
\end{align*}
Trivially, $|\text{IIA}| \leq R_0(\epsilon)M[u]$, and by
\eqref{E:compact1}, $|\text{IIC}| \leq |x(t)|\epsilon \leq \tilde
R_n\epsilon$. Thus,
\begin{align*}
|z_{\tilde R_n}(t)| &\geq |\text{IIB}| - |\text{I}| - |\text{IIA}| - |\text{IIC}|\\
&\geq |x(t)|M[u]-R_0(\epsilon)M[u]-3\tilde R_n\epsilon \, .%
\end{align*}
Taking $t=\tilde t_n$, we get
\begin{equation}
\label{E:zt}
|z_{\tilde R_n}(\tilde t_n)| \geq \tilde R_n(M[u]-3\epsilon) - R_0(\epsilon)M[u] \, .
\end{equation}
Combining \eqref{E:derivative-z}, \eqref{E:z0}, and \eqref{E:zt}, we have
\begin{align*}
5\,\epsilon\, \tilde t_n & \geq \int_0^{\tilde t_n} |z'_{\tilde R_n}(t)| \, dt \geq \left|  \int_0^{\tilde t_n} z'_{\tilde R_n}(t) \, dt \right| \geq |z_{\tilde{R}_n}(\tilde t_n) - z_{\tilde{R}_n}(0)| \\
&\geq \tilde R_n(M[u]-5\epsilon) - 2R_0(\epsilon)M[u] \, .%
\end{align*}
Dividing by $\tilde t_n$ and using that $\tilde R_n \geq R_n$ (assume $\epsilon\leq \frac15M[u]$), we obtain
$$
5\, \epsilon \geq \frac{R_n}{\tilde t_n}(M[u]-5\epsilon) -
\frac{2R_0(\epsilon)M[u]}{\tilde t_n} \, .
$$
Since $R_n/\tilde t_n \geq \epsilon_0$, we have
$$
5\, \epsilon \geq \epsilon_0(M[u]-5\epsilon) -
\frac{2R_0(\epsilon)M[u]}{\tilde t_n} \, .
$$
Take $\epsilon=M[u]\ep_0/16$ (assume $\epsilon_0\leq 1$), and then
send $n\to +\infty$.  Since $\tilde t_n\to+\infty$, we get a
contradiction.
\end{proof}

\section{Rigidity theorem}
\label{S:rigidity}

\newcommand{\GN}{\textnormal{GN}}

We now prove the following rigidity, or Liouville-type, theorem.

\begin{theorem}[Rigidity]
 \label{T:rigidity}
Suppose $u_0 \in H^1$ satisfies $P[u_0]=0$,
\begin{equation}
 \label{E:comp1}
M[u_0]E[u_0] < M[Q]E[Q]
\end{equation}
and
\begin{equation}
 \label{E:comp2}
\|u_0\|_{L^2} \|\nabla u_0\|_{L^2} < \|Q\|_{L^2} \|\nabla Q
\|_{L^2}.
\end{equation}
Let $u$ be the global $H^1$ solution of \eqref{E:NLS} with initial
data $u_0$ and suppose that
$$
K = \{ \, u(\cdot-x(t), t) \, | \, t \in [0,+\infty) \, \} \quad
\text{is precompact in} ~ H^1.
$$
Then $u_0 = 0$.
\end{theorem}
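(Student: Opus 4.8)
The plan is to run the standard Kenig--Merle convexity/rigidity argument, but using the control $x(t)/t \to 0$ from Lemma \ref{L:x-control} to handle the nonradial translation parameter. First, I would rule out the case $u_0 = 0$ being the only possibility by assuming $u_0 \neq 0$ and deriving a contradiction. The engine is the localized virial identity: fix a radial cutoff $\chi \in C_c^\infty(\mathbb{R}^3)$ with $\chi(x) = |x|^2$ for $|x| \leq 1$ and $\chi$ supported in $|x| \leq 2$, set $\chi_R(x) = R^2 \chi(x/R)$, and define
$$
z_R(t) = \int \chi_R(x)\, |u(x,t)|^2\, dx.
$$
A direct computation gives $z_R'(t) = 2\,\im \int \nabla\chi_R \cdot \nabla u\, \bar u\, dx$, which is $O(R)$ by Cauchy--Schwarz and the uniform $H^1$ bound, and differentiating once more yields the virial expression
$$
z_R''(t) = 8\int |\nabla u|^2 - 6\int |u|^4 + A_R(t),
$$
where the error term $A_R(t)$ collects all contributions from the region $|x| \geq R$ and is bounded by $C\int_{|x|\geq R}(|\nabla u|^2 + |u|^2 + |u|^4)$ uniformly in $t$.

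Second, I would establish the strictly positive lower bound on the main term. Since $M[u]E[u] < M[Q]E[Q]$ and $\|u_0\|_{L^2}\|\nabla u_0\|_{L^2} < \|Q\|_{L^2}\|\nabla Q\|_{L^2}$, the variational analysis in \cite{HR2} (via the sharp Gagliardo--Nirenberg inequality) gives a constant $c_0 > 0$, depending only on $M[u]E[u]/M[Q]E[Q]$, such that
$$
8\int |\nabla u(t)|^2 - 6\int |u(t)|^4 \geq c_0 \int |\nabla u(t)|^2 \geq c_0 c_1 > 0
$$
for all $t \geq 0$, where $c_1 > 0$ is a lower bound on $\|\nabla u(t)\|_{L^2}^2$ coming from the same coercivity (if $u_0 \neq 0$, the flow cannot have vanishing gradient norm without violating the mass-energy constraint the wrong way — this uses that $\|\nabla u(t)\|_{L^2}$ stays on the same side of $\|Q\|_{L^2}\|\nabla Q\|_{L^2}$ as the initial data). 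Meanwhile, by Corollary \ref{C:unif_small} applied with a small $\epsilon$, there is $R_0(\epsilon)$ so that for $|x + x(t)| \geq R_0(\epsilon)$ the integrand of $A_R$ is controlled; choosing $R = R_0(\epsilon) + \sup_{0\leq t \leq T}|x(t)|$ on a time window $[0,T]$ forces $\{|x| \geq R\} \subset \{|x+x(t)| \geq R_0(\epsilon)\}$ there, whence $|A_R(t)| \leq C\epsilon$ on $[0,T]$. Fixing $\epsilon$ small enough that $C\epsilon < \tfrac12 c_0 c_1$, we get $z_R''(t) \geq \tfrac12 c_0 c_1 =: \delta > 0$ for all $t \in [0,T]$.

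Third, I would close the argument by integrating. From $z_R'' \geq \delta$ on $[0,T]$ we get $z_R(T) - z_R(0) - T z_R'(0) \geq \tfrac12 \delta T^2$, so $z_R(T) \geq \tfrac12 \delta T^2 - C R T - C R^2$ for some constant $C$ (using the $O(R)$ bound on $z_R'$ and the $O(R^2)$ bound on $z_R$). On the other hand, trivially $|z_R(t)| \leq \|\chi_R\|_\infty M[u] \leq C R^2$. The key point is that $R = R(T)$ can be taken to grow \emph{sublinearly} in $T$: by Lemma \ref{L:x-control}, $|x(t)|/t \to 0$, so $\sup_{0\leq t\leq T}|x(t)| = o(T)$, hence $R(T) = o(T)$ as $T \to \infty$. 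Therefore, with $R = R(T) = o(T)$,
$$
C R(T)^2 \geq z_R(T) \geq \tfrac12 \delta T^2 - C R(T) T - C R(T)^2,
$$
and dividing by $T^2$ and letting $T \to \infty$ gives $0 \geq \tfrac12 \delta > 0$, a contradiction. Hence $u_0 = 0$.

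The main obstacle is the bookkeeping in the second step: one must verify that the localized virial error $A_R(t)$ is genuinely uniformly small — not merely small for each fixed $t$ — which is exactly what the precompactness of $K$ (Corollary \ref{C:unif_small}) delivers, but only after aligning the cutoff radius $R$ with the moving center $x(t)$; and one must simultaneously keep $R$ sublinear in $T$, which is precisely the role of Lemma \ref{L:x-control}. These two requirements pull in opposite directions (larger $R$ makes $A_R$ smaller but weakens the sublinear growth margin), and the argument works only because $\sup_{[0,T]}|x(t)|$ is $o(T)$ while the needed $R_0(\epsilon)$ is a fixed constant once $\epsilon$ is chosen. A secondary point requiring care is the lower bound $\|\nabla u(t)\|_{L^2}^2 \geq c_1 > 0$ for $u_0 \neq 0$, which follows from mass conservation together with the fact that the trapping region $\|u\|_{L^2}\|\nabla u\|_{L^2} < \|Q\|_{L^2}\|\nabla Q\|_{L^2}$ is preserved and bounded away from degeneracy under the strict mass-energy inequality.
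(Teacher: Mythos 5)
Your proposal is correct and follows essentially the same route as the paper: localized virial identity, coercivity from the sharp Gagliardo--Nirenberg trapping (giving a uniform positive lower bound on $8\|\nabla u\|_{L^2}^2-6\|u\|_{L^4}^4$ via Lemma~\ref{L:coercive}), uniform localization around $x(t)$ from Corollary~\ref{C:unif_small}, the choice $R\approx R_0(\epsilon)+\sup_{[0,T]}|x(t)|$, and the sublinear growth $\sup_{[0,T]}|x(t)|=o(T)$ from Lemma~\ref{L:x-control}. The only difference is cosmetic: you integrate the virial inequality twice and compare with the $O(R^2)$ bound on $z_R$ itself, whereas the paper integrates once over $[t_0,t_1]$ and compares with the $O(R)$ bound on $z_R'$; your phrasing of the gradient lower bound is a bit loose, but the cleaner route is simply $\|\nabla u(t)\|_{L^2}^2\geq 2E[u]>0$, which is exactly what the paper uses.
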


Before beginning the proof, we recall in Lemma \ref{L:coercive}
below a few basic facts proved in \cite{HR2}.  These facts are
consequences of the Gagliardo-Nirenberg inequality
$$
\|u\|_{L^4}^4 \leq c_{\GN}\|u\|_{L^2}\|\nabla u\|_{L^2}^3
$$
with the sharp value of $c_{\GN}$ expressed as
$$
c_{\GN} = \frac{4}{3\|Q\|_2\|\nabla Q\|_{2}} \,.
$$
One also uses the relation
$$
M[Q]E[Q] = \frac16\|Q\|_{L^2}^2\|\nabla Q\|_{L^2}^2 \,,
$$
which is a consequence of the Pohozhaev identities.

\begin{lemma}
 \label{L:coercive}
If $M[u]E[u] < M[Q]E[Q]$ and $\|u_0\|_{L^2}\|\nabla u_0\|_{L^2} <
\|Q\|_{L^2}\|\nabla Q\|_{L^2}$, then for all $t$,
\begin{equation}
\label{E:coercive}
\|u(t)\|_{L^2}\|\nabla u(t)\|_{L^2} \leq \omega \|Q\|_{L^2}\|\nabla Q\|_{L^2}
\end{equation}
where $\omega=\left(\frac{M[u]E[u]}{M[Q]E[Q]}\right)^{1/2}$.
We also have the bound, for all $t$
\begin{equation}
\label{E:virial-bound} 8\|\nabla u(t)\|^2-6\|u(t)\|_{L^4}^4 \geq
8(1-\omega)\|\nabla u(t)\|_{L^2}^2 \geq 16(1-\omega)E[u].
\end{equation}
We remark that under the hypotheses here, $E[u]>0$ unless $u\equiv
0$. In fact, one has the bound $E[u] \geq \frac16\,\|\nabla
u_0\|_{L^2}^2$.
\end{lemma}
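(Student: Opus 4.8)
The plan is to run the classical sharp Gagliardo--Nirenberg and conservation-law argument (Weinstein \cite{W83}, \cite{HR2}), organized around the scalar quantity $y(t):=\|u(t)\|_{L^2}\|\nabla u(t)\|_{L^2}$, which by mass conservation equals $\|u_0\|_{L^2}\|\nabla u(t)\|_{L^2}$. First I would record, from $E[u(t)]=E[u_0]$, $M[u(t)]=M[u_0]$ and the sharp bound $\|u\|_{L^4}^4\le c_{\GN}\|u\|_{L^2}\|\nabla u\|_{L^2}^3$, the inequality
\[
M[u]E[u]\ \ge\ \frac12\,y(t)^2-\frac{c_{\GN}}{4}\,y(t)^3\ =:\ g\big(y(t)\big)\qquad\text{for all }t.
\]
Inserting $c_{\GN}=\frac{4}{3\|Q\|_{L^2}\|\nabla Q\|_{L^2}}$, one checks that $g$ is strictly increasing on $[0,y_*]$ and strictly decreasing on $[y_*,\infty)$, where $y_*:=\|Q\|_{L^2}\|\nabla Q\|_{L^2}=\frac{4}{3c_{\GN}}$, and that $g(y_*)=\frac16\|Q\|_{L^2}^2\|\nabla Q\|_{L^2}^2=M[Q]E[Q]$ (using the two identities recalled just above the lemma). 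Recall also that under these hypotheses the solution is global (\cite{HR2}), so $t\mapsto y(t)$ is continuous on $\mathbb{R}$. We may assume $u_0\not\equiv0$ (the case $u_0\equiv0$ being trivial).

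Next I would obtain the energy bound, which also makes $\omega$ meaningful. Evaluating the Gagliardo--Nirenberg estimate at $t=0$ gives $E[u]=E[u_0]\ge\|\nabla u_0\|_{L^2}^2\big(\frac12-\frac{c_{\GN}}{4}y(0)\big)$; since the hypothesis $\|u_0\|_{L^2}\|\nabla u_0\|_{L^2}<\|Q\|_{L^2}\|\nabla Q\|_{L^2}$ reads $y(0)<y_*$, i.e. $\frac{c_{\GN}}{4}y(0)<\frac13$, we get $E[u]\ge\frac16\|\nabla u_0\|_{L^2}^2>0$, so $\omega:=\big(M[u]E[u]/M[Q]E[Q]\big)^{1/2}\in(0,1)$ is well defined by the hypothesis $M[u]E[u]<M[Q]E[Q]$. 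The only non-algebraic point is then a continuity (``no-crossing'') argument for \eqref{E:coercive}: if $y(t_1)\ge y_*$ for some $t_1$, then, since $y(0)<y_*$ and $y$ is continuous, the intermediate value theorem yields $t_0$ with $y(t_0)=y_*$, whence $g(y(t_0))=g(y_*)=M[Q]E[Q]$, contradicting $g(y(t))\le M[u]E[u]<M[Q]E[Q]$. Hence $y(t)<y_*$ for all $t$. Since $g(\omega y_*)-\omega^2 g(y_*)=\frac{c_{\GN}}{4}y_*^3\,\omega^2(1-\omega)\ge0$, we get $g(y(t))\le M[u]E[u]=\omega^2 g(y_*)\le g(\omega y_*)$, and the strict monotonicity of $g$ on $[0,y_*]$ forces $y(t)\le\omega y_*$, i.e. \eqref{E:coercive}.

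Finally, \eqref{E:virial-bound} is pure bookkeeping: using the sharp inequality once more,
\[
8\|\nabla u(t)\|_{L^2}^2-6\|u(t)\|_{L^4}^4\ \ge\ 8\|\nabla u(t)\|_{L^2}^2\Big(1-\frac{3c_{\GN}}{4}\,y(t)\Big)\ \ge\ 8(1-\omega)\|\nabla u(t)\|_{L^2}^2,
\]
where the last step uses $\frac{3c_{\GN}}{4}y(t)\le\omega$ from \eqref{E:coercive}; and since $2E[u]=\|\nabla u(t)\|_{L^2}^2-\frac12\|u(t)\|_{L^4}^4\le\|\nabla u(t)\|_{L^2}^2$, the right-hand side is $\ge16(1-\omega)E[u]$. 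The one step requiring any thought is the continuity argument, but it is routine given $u\in C(\mathbb{R};H^1)$; everything else is algebra with the two quoted identities, exactly as in \cite{W83} and \cite{HR2}.
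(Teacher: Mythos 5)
Your proof is correct, and it is precisely the argument that the paper is invoking when it writes that these facts are ``proved in \cite{HR2}'' and that they are ``consequences of the Gagliardo--Nirenberg inequality'' with the sharp constant $c_{\GN}=4/(3\|Q\|_2\|\nabla Q\|_2)$ and the Pohozhaev relation $M[Q]E[Q]=\frac16\|Q\|_2^2\|\nabla Q\|_2^2$. The paper does not reproduce the proof, only points to \cite{HR2}; your write-up (the cubic $g(y)=\tfrac12y^2-\tfrac{c_{\GN}}{4}y^3$, the continuity/no-crossing step, and the monotonicity comparison $g(y(t))\le \omega^2 g(y_*)\le g(\omega y_*)$) is the standard derivation there, and the algebra for \eqref{E:virial-bound} and the energy lower bound $E[u]\ge\frac16\|\nabla u_0\|_{L^2}^2$ is carried out correctly.
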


\begin{proof}[Proof of Theorem \ref{T:rigidity}]
In the proof below, all instances of a constant $c$ refer to some
absolute constant.
Let $\varphi \in C_0^{\infty}$ be radial with
$$
\varphi(x)=\left\{
\begin{array}{lll}
|x|^2 &\text{for} &|x|\leq 1\\
0 &\text{for}&|x|\geq 2
\end{array}
\right. \, .
$$
For $R>0$, define
$$z_R(t) = \int R^2 \varphi\left(\frac{x}{R}\right)
\, |u(x,t)|^2 \, dx \,.$$
Then, by direct calculation,
$$z_R'(t) = 2\I \int R\nabla\varphi\left(\frac{x}{R}\right)\cdot \nabla u(t) \; \bar u(t) \, dx$$
By the H\"older inequality,
\begin{equation}
\label{E:z'R-bound} |z_R'(t)| \leq cR\int_{|x|\leq 2R} |\nabla
u(t)||u(t)|\, dx \leq cR\|\nabla u(t)\|_{L^2}\|u(t)\|_{L^2}
\end{equation}
Also by direct calculation, we have the local virial identity
$$
z_R''(t)=4\sum_{j,k}
\int\frac{\partial^2\varphi}{\partial{x_j}\partial{x_k}}
\left(\frac{x}{R}\right) \frac{\partial u}{\partial{x_j}}
\frac{\partial \bar{u}}{\partial x_k}-\frac{1}{R^2}\int (\Delta^2
\varphi) \left(\frac{x}{R}\right)|u|^2-\int (\Delta \varphi)
\left(\frac xR\right)|u|^4.
$$
Since $\varphi$ is radial, we have
\begin{equation}
\label{E:z''R}
z''_R(t) = \left(8\int |\nabla u|^2-6\int |u|^4\right)+A_R(u(t)),
\end{equation}
where
$$A_R(u(t)) =
\begin{aligned}[t]
& 4 \sum_{j} \int \left((\partial_{x_j}^2 \varphi)\left(\frac
xR\right) - 2 \right)\left|\partial_{x_j}u\right|^2 + 4 \sum_{j \neq
k} \int\limits_{R \leq |x| \leq 2R}
\frac{\partial^2\varphi}{\partial{x_j}
\partial{x_k}} \left(\frac{x}{R}\right) \frac{\partial
u}{\partial{x_j}} \frac{\partial \bar{u}}{\partial x_k}\\
& -\frac{1}{R^2} \int (\Delta^2\varphi)\left(\frac{x}{R}\right)|u|^2
-\int \left((\Delta \varphi) \left(\frac xR\right)-6\right)|u|^4.
\end{aligned}
$$
From this expression, we obtain the bound
\begin{equation}
 \label{E:AR-bound}
|A_R(u(t))| \leq c \int_{|x|\geq R} \left( |\nabla u(t)|^2 +
\frac1{R^2}|u(t)|^2 + |u(t)|^4 \right)\, dx.
\end{equation}

We want to examine $z_R(t)$, for $R$ chosen suitably large, over a
suitably chosen time interval $[t_0,t_1]$, where $1 \ll t_0 \ll t_1
<\infty$.   By \eqref{E:z''R} and \eqref{E:virial-bound}, we have
\begin{equation}
 \label{E:rig1}
|z_R''(t)| \geq 16(1-\omega)E[u] - |A_R(u(t))|.
\end{equation}
Set $\epsilon = \frac{1-\omega}{c}E[u]$ in Corollary \ref{C:unif_small}
to obtain $R_0\geq 0$ such that $\forall \; t$,
\begin{equation}
\label{E:rig2}
\int_{|x+x(t)|\geq R_0} (|\nabla u|^2 +|u|^2+|u|^4) \leq \frac{(1-\omega)}{c}E[u].
\end{equation}
If we select $R\geq R_0+\sup_{t_0\leq t\leq t_1} |x(t)|$, then
\eqref{E:rig1} combined with the bounds \eqref{E:AR-bound} and
\eqref{E:rig2} will imply that, for all $t_0\leq t \leq t_1$,
\begin{equation}
 \label{E:rig3}
|z_R''(t)|\geq 8(1-\omega)E[u].
\end{equation}

By Lemma \ref{L:x-control}, there exists $t_0\geq 0$ such that for all $t\geq t_0$, we have $|x(t)| \leq \eta t$, with $\eta>0$ to be selected later.  Thus, by taking $R=R_0+\eta t_1$, we obtain that \eqref{E:rig3} holds for all $t_0\leq t \leq t_1$.  Integrating \eqref{E:rig3} over $[t_0,t_1]$, we obtain
\begin{equation}
 \label{E:rig4}
|z_R'(t_1)-z_R'(t_0)| \geq 8(1-\omega)E[u](t_1-t_0).
\end{equation}
On the other hand, for all $t_0\leq t\leq t_1$, by
\eqref{E:z'R-bound} and \eqref{E:coercive}, we have
\begin{equation}
 \label{E:rig5}
\begin{aligned}
|z'_R(t)| & \leq cR \|u(t)\|_{L^2}\|\nabla u(t)\|_{L^2} \leq cR  \|Q\|_{L^2}\|\nabla Q\|_{L^2} \\
&\leq c  \|Q\|_{L^2}\|\nabla Q\|_{L^2}(R_0+\eta t_1).
\end{aligned}
\end{equation}
Combining \eqref{E:rig4} and \eqref{E:rig5}, we obtain
$$
8(1-\omega)E[u](t_1-t_0) \leq 2c\|Q\|_{L^2}\|\nabla
Q\|_{L^2}(R_0+\eta t_1).
$$
Recall that $\omega$ and $R_0$ are constants depending only upon
$(M[u]E[u])/(M[Q]E[Q])$, while $\eta>0$ is yet to be specified and
$t_0=t_0(\eta)$.  Put $\eta=(1-\omega)E[u]/(c\|Q\|_2\|\nabla Q\|_2)$
and then send $t_1\to +\infty$ to obtain a contradiction unless
$E[u]=0$ which implies $u\equiv 0$.
\end{proof}

To complete the proof of Theorem \ref{T:non-radial}, we just apply
Theorem \ref{T:rigidity} to $u_\crit$ constructed in Proposition
\ref{P:exist_crit}, which by Propositions \ref{P:crit_compact} and
\ref{P:zeromomentum}, meets the hypotheses in Theorem
\ref{T:rigidity}.  Thus $u_{\crit,0}=0$, which contradicts the fact
that $\|u_\crit\|_{S(\dot H^{1/2})}=\infty$. We have thus obtained
that if $\|u_0\|_{L^2}\|\nabla u_0\|_{L^2}<\|Q\|_{L^2}\|\nabla
Q\|_{L^2}$ and $M[u]E[u]<M[Q]E[Q]$, then $\text{SC}(u_0)$ holds,
i.e.\ $\|u\|_{S(\dot H^{1/2})} <\infty$.  By Proposition 2.2
\cite{HR2}, $H^1$ scattering holds.

\section{Remarks on the defocusing equation}
\label{S:defocusing}%
One may use the above arguments to show $H^1$-scattering of
solutions of the defocusing equation
\begin{gather}
 \label{E:NLS2}
i\partial_t u +\Delta u - |u|^2 \, u=0, \quad (x,t)\in
\mathbb{R}^3\times \mathbb{R},\\
 \label{E:initialdata2}
u(x,0) = u_0 \in H^1(\cR^3).
\end{gather}
In this case, scattering is already known, as a consequence of
Morawetz \cite{GV85}, or interaction Morawetz \cite{CKSTT04}
inequalities.

We argue by contradiction. If scattering does not hold, there exists
a critical solution $u_c$, which does not scatter, and such that
$M[u_c]E[u_c]$ is minimal for non-scattering solutions of
\eqref{E:NLS2}. As before, one shows that $P[u_c]=0$, and that there
exists $x(t)$ such that the set $K=\{u_c(t,\cdot-x(t)),\;t\in
\mathbb{R}\}$ is precompact in $H^1$. Note that because of the
defocusing sign of the non-linearity, we do not need to assume
$M[u_c]E[u_c]<M[Q]E[Q]$ and $\|u_c(0)\|_{L^2}\, \| \nabla
u_c(0)\|_{L^2} < \|Q\|_{L^2} \, \|\nabla Q\|_{L^2}$. The control of
the spatial translation $x(t)$ works as in Section
\ref{S:x-control}, and one concludes as in Section \ref{S:rigidity},
by a localized virial argument, using that in the defocusing case,
the second derivative of the localized variance $z_R(t)$ is
$$
z''_R(t) = \left( 8 \int |\nabla u|^2+6\int
|u|^4\right)+B_R(u(t)),$$ where $B_R$ satisfies the bound
$$
|B_R(u(t))| \leq c \int_{|x|\geq R} \left( |\nabla u(t)|^2 +
\frac1{R^2}|u(t)|^2 + |u(t)|^4 \right)\, dx.
$$
Note that the use of the virial identity is potentially more robust
since one might be able to handle variants of the NLS equation (for
example with a linear potential) that might be out of reach for
Morawetz based proofs.

\appendix

\section{A lifting lemma}
\label{A:lifting}

In this appendix, we discuss some basic analysis facts needed in the
very beginning of the proof of Prop. \ref{P:crit_compact}.

Let $G \cong \mathbb{R}^3$ act on $H^1$ by translation, i.e.,
$(x_0\cdot \phi)(x) = \phi(x-x_0)$. Write $G\backslash H^1$ for the
quotient space endowed with the quotient topology.   We represent
elements of $G\backslash H^1$ (the equivalence classes) by $[\phi]$,
and let $\pi:H^1\to G\backslash H^1$ be the natural projection.

\begin{lemma}
$G\backslash H^1$ is metrizable with metric
$$d([\phi],[\psi]) = \inf_{x_0\in \mathbb{R}^3} \|\phi(\cdot-x_0)-\psi\|_{H^1}.$$
With respect to this metric, $G\backslash H^1$ is complete. (Caution
that $G\backslash H^1$ is not a vector space, however.)
\end{lemma}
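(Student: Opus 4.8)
The plan is to verify in turn that $d$ is a well-defined metric, that the metric topology coincides with the quotient topology, and that $(G\backslash H^1,d)$ is complete; the step I expect to be the only genuinely delicate one is the closedness of the translation orbits in $H^1$, i.e.\ that $d$ separates points. For the metric axioms I would first use translation invariance of the $H^1$-norm to see that $d([\phi],[\psi])$ is independent of the choice of representatives and is symmetric (from $\|\phi(\cdot-x_0)-\psi\|_{H^1}=\|\psi(\cdot+x_0)-\phi\|_{H^1}$ together with $x_0\mapsto-x_0$), and I would obtain the triangle inequality by composing near-optimal translations: given $\ep>0$, choose $a,b$ with $\|\phi(\cdot-a)-\psi\|_{H^1}\le d([\phi],[\psi])+\ep$ and $\|\psi(\cdot-b)-\chi\|_{H^1}\le d([\psi],[\chi])+\ep$, bound $\|\phi(\cdot-a-b)-\chi\|_{H^1}\le\|\phi(\cdot-a)-\psi\|_{H^1}+\|\psi(\cdot-b)-\chi\|_{H^1}$, and let $\ep\to0$. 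It then remains to show $d([\phi],[\psi])=0\Rightarrow[\phi]=[\psi]$.

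\emph{Closedness of orbits (the main obstacle).} Here I would take a sequence $x_n$ with $\phi(\cdot-x_n)\to\psi$ in $H^1$ and first argue that $(x_n)$ is bounded. If not, then along a subsequence $|x_n|\to\infty$, and for every fixed $R>0$ one has $\int_{|x|\le R}|\phi(x-x_n)|^2\,dx=\int_{|y+x_n|\le R}|\phi(y)|^2\,dy\to0$ by dominated convergence; since $\phi(\cdot-x_n)\to\psi$ in $L^2$, this forces $\int_{|x|\le R}|\psi|^2=0$ for every $R$, hence $\psi\equiv0$, hence $\|\phi\|_{L^2}=\lim_n\|\phi(\cdot-x_n)\|_{L^2}=0$, hence $\phi\equiv0$, so trivially $[\phi]=[\psi]$. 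If $(x_n)$ is bounded, I would pass to a subsequence with $x_n\to x_0$ and invoke the strong continuity of the translation group on $H^1$ to get $\phi(\cdot-x_n)\to\phi(\cdot-x_0)$ in $H^1$; comparing with $\phi(\cdot-x_n)\to\psi$ yields $\psi=\phi(\cdot-x_0)$, i.e.\ $[\phi]=[\psi]$.

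\emph{Metric topology equals quotient topology.} Write $\tau_d$ for the metric topology and $\tau_q$ for the quotient topology. From $d([\phi],[\psi])\le\|\phi-\psi\|_{H^1}$ the projection $\pi$ is continuous into $(G\backslash H^1,\tau_d)$, so preimages of $\tau_d$-open sets are open in $H^1$, i.e.\ $\tau_d\subseteq\tau_q$. Conversely, for $U\in\tau_q$ the set $\pi^{-1}(U)$ is open in $H^1$ and, being a union of orbits, $G$-invariant; given $[\phi]\in U$, pick $\ep>0$ with $B_{H^1}(\phi,\ep)\subseteq\pi^{-1}(U)$, so by $G$-invariance and translation invariance $B_{H^1}(\phi(\cdot-x_0),\ep)\subseteq\pi^{-1}(U)$ for every $x_0\in\mathbb{R}^3$; hence every $\psi$ with $d([\phi],[\psi])<\ep$ lies in $\pi^{-1}(U)$, which gives $B_d([\phi],\ep)\subseteq U$ and $U\in\tau_d$. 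Thus $\tau_q\subseteq\tau_d$.

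\emph{Completeness.} Given a $d$-Cauchy sequence $([\phi_n])$, it suffices to produce a convergent subsequence, so I would pass to one with $d([\phi_n],[\phi_{n+1}])<2^{-n}$ and then select representatives inductively: given a representative $\tilde\phi_n$, the definition of $d$ provides a representative $\tilde\phi_{n+1}$ of $[\phi_{n+1}]$ with $\|\tilde\phi_n-\tilde\phi_{n+1}\|_{H^1}<2^{-n}$. Then $(\tilde\phi_n)$ is Cauchy in the complete space $H^1$, hence converges to some $\tilde\phi\in H^1$, and $d([\phi_n],[\tilde\phi])\le\|\tilde\phi_n-\tilde\phi\|_{H^1}\to0$; since a Cauchy sequence with a convergent subsequence converges, $(G\backslash H^1,d)$ is complete. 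I expect the orbit-closedness step to be the only place requiring a real argument; everything else is bookkeeping with translation invariance, the completeness of $H^1$, and elementary properties of the quotient topology.
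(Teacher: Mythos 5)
Your proposal is correct and follows essentially the same route as the paper: reduce the metric axioms to the closedness of $G$-orbits in $H^1$, handle the orbit-closedness by splitting into unbounded versus bounded translation sequences, identify the metric and quotient topologies, and prove completeness by inductively selecting representatives so that consecutive terms are $2^{-n}$-close in $H^1$. The only (cosmetic) difference is in the bounded case of orbit closedness: you simply pass to a convergent subsequence $x_n \to x_0$ and invoke strong continuity of translations, whereas the paper shows the full sequence $x_n$ converges by ruling out two distinct subsequential limits via periodicity of $\phi$; both arguments are valid and yield the same conclusion.
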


\begin{proof}
First, we establish that the orbits of $G$ are closed in $H^1$. The
orbit of $0$ is $0$.  Suppose $\phi\neq 0$, $\{ x_n \} \subset
\mathbb{R}^3$ and $\phi(\cdot-x_n)$ converges to $\psi$ in $H^1$.
Then we claim that $x_n$ converges.  Indeed, if not, then either
$x_n$ is unbounded and there is a subsequence $x_n$ such that
$|x_n|\to \infty$, or $x_n$ is bounded and there are two
subsequences $x_n\to x_0$ and $x_{n'}\to x_0'$.  In the first case,
we obtain that $\psi=0$ (by examining, for fixed $R>0$, the
convergence on $B(0,R)$), which implies $\phi=0$, a contradiction.
In the second case, we obtain that
$\phi(\cdot-x_0)=\phi(\cdot-x_0')$, only possible if $\phi=0$, a
contradiction.

Next, we verify that $d$ is a metric.  Suppose $d([\phi],[\psi])
=0$.  Then $\inf_{x_0\in\mathbb{R}^3}
\|\phi(\cdot-x_0)-\psi\|_{H^1}=0$, and thus $\psi$ is a point of
closure (in $H^1$) of the orbit of $\phi$.  But since the orbits are
closed, $\psi$ belongs to this orbit, and thus, $[\phi]=[\psi]$.
The triangle inequality is a straightforward exercise dealing with
infima, and symmetry is obvious.

Suppose $[\phi_n]$ is a Cauchy sequence; to show that it converges,
it suffices to show that a subsequence converges. We can pass to a
subsequence $[\phi_n]$ so that $d([\phi_n],[\phi_{n+1}])\leq
2^{-n}$.  Take $x_1=0$.  Construct a sequence $x_n$ inductively as
follows:  given $x_{n-1}$, select $x_n$ so that
$\|\phi_{n-1}(\cdot-x_{n-1})-\phi_n(\cdot-x_n)\|_{H^1}\leq
2^{-n+1}$.  Then $\phi_n(\cdot-x_n)$ is a Cauchy sequence in $H^1$,
and hence, converges to some $\phi$.  It is then clear that
$[\phi_n]\to [\phi]$ in $G\backslash H^1$.

It can be checked that for each $\phi\in H^1$ and $r>0$,
$\pi(B(\phi,r)) = B([\phi],r)$.  Therefore, the topology induced by
the metric $d$ on $G\backslash H^1$ is the quotient topology.
\end{proof}

The following two lemmas will reduce Prop. \ref{P:crit_compact}  to
proving that the set $\pi( \{\, u(\cdot,t) \, |\, t\in
[0,+\infty)\,\})$ is precompact in $G\backslash H^1$.

\begin{lemma}
\label{L:lifting}
Let $K$ be a precompact subset of $G\backslash H^1$. Assume
\begin{equation}
\label{lowerbound} \exists \;\eta >0 \text{ such that } \forall \;
\phi \in \pi^{-1}(K),\quad \eta\leq \|\phi\|_{H^1}.
\end{equation}
Then there exists $\tilde K$ precompact in $H^1$ such that $\pi(\tilde K) = K$.
\end{lemma}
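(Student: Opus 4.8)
The plan is to lift a convergent sequence in $G\backslash H^1$ to a convergent sequence in $H^1$, using the lower bound \eqref{lowerbound} to pin down the translation parameters, and then to take $\tilde K$ to be the closure in $H^1$ of a countable dense set of such lifts. Concretely, since $G\backslash H^1$ is a metric space (by the previous lemma) and $K$ is precompact, $K$ is separable; pick a countable dense subset $\{[\phi_m]\}_{m\geq 1}$ of $K$ and fix representatives $\phi_m \in \pi^{-1}(K)$ with $\pi(\phi_m)=[\phi_m]$. I set $\tilde K = \overline{\{\phi_m : m\geq 1\}}^{\,H^1}$. Then $\pi(\tilde K)\subseteq K$ since $\pi$ is continuous and $K$ is closed (being precompact in a complete metric space — actually we only need $\pi(\{\phi_m\})\subseteq K$ and $\pi$ continuous, so $\pi(\tilde K)\subseteq \overline{K}=K$ as $K$ is compact hence closed). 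Conversely $K = \overline{\{[\phi_m]\}} = \pi(\{\phi_m\}) \cup (\text{limit points}) \subseteq \pi(\tilde K)$ because $\pi(\tilde K)$ is compact (see below) hence closed and contains the dense set $\{[\phi_m]\}$. So $\pi(\tilde K)=K$.

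The real content is showing $\tilde K$ is precompact in $H^1$, i.e.\ that $\{\phi_m\}$ is totally bounded in $H^1$. Here is where I would use \eqref{lowerbound}. Given $\eps>0$, cover $K$ by finitely many balls $B([\chi_i],\delta)$, $i=1,\dots,N$, for a small $\delta=\delta(\eps)$ to be chosen, with centers $\chi_i\in \pi^{-1}(K)$. Each $\phi_m$ lies in some $B([\chi_i],\delta)$, so there is a translate $\phi_m(\cdot - y_{m})$ with $\|\phi_m(\cdot-y_m)-\chi_i\|_{H^1}<\delta$. The point is now to show the family $\{\phi_m(\cdot-y_m)\}$ (indexed with the matching $i$) stays in a precompact set: intuitively, once we have matched translates, the sequence is $H^1$-close to one of finitely many functions, hence totally bounded. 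More carefully: for each fixed $i$, the set $\{\psi\in H^1 : \|\psi-\chi_i\|_{H^1}<\delta\}$ has diameter $2\delta$, so $\{\phi_m(\cdot-y_m): \phi_m\in B([\chi_i],\delta)\}$ is contained in a ball of radius $\delta$ about $\chi_i$; taking the union over $i=1,\dots,N$, the translated family is covered by $N$ balls of radius $\delta$. Thus the \emph{translated} family $\{\phi_m(\cdot - y_m)\}_m$ is totally bounded in $H^1$, hence precompact; let $L$ denote its $H^1$-closure, a compact set. Then $\{\phi_m\}\subseteq G\cdot L := \{g\cdot\ell : g\in G,\ \ell\in L\}$, the full orbit of a compact set, and $\tilde K\subseteq \overline{G\cdot L}$.

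The main obstacle is precisely that $G\cdot L$ need \emph{not} be precompact in $H^1$ — translations run off to infinity — so I cannot simply conclude. This is exactly what \eqref{lowerbound} is there to prevent: I must argue that the translation parameters $y_m$ cannot escape. Suppose, for contradiction, $\{\phi_m\}$ is not totally bounded; then (after passing to a subsequence) $\|\phi_m - \phi_{m'}\|_{H^1}$ stays bounded below for $m\neq m'$. But $\pi(\phi_m)=[\phi_m]\to[\phi_\infty]$ in $G\backslash H^1$ along a further subsequence (precompactness of $K$), so $d([\phi_m],[\phi_{m'}])\to 0$, meaning there exist translations $z_m$ with $\phi_m(\cdot - z_m)\to \phi_\infty$ in $H^1$ for a suitable representative $\phi_\infty$ — wait, more precisely $\inf_{x_0}\|\phi_m(\cdot-x_0)-\phi_\infty\|_{H^1}\to 0$, so pick $z_m$ with $\|\phi_m(\cdot-z_m)-\phi_\infty\|_{H^1}\to 0$. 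Now $\|\phi_\infty\|_{H^1}\geq \eta$ by \eqref{lowerbound} and continuity, so $\phi_\infty\neq 0$. I claim the $z_m$ then converge: this is the same argument as in the proof that orbits are closed in the previous lemma — if $|z_m-z_{m'}|$ does not go to $0$ along some subsequence, then either it is unbounded (forcing $\phi_\infty=0$ by looking at a fixed ball, contradiction) or it has two distinct subsequential limits (forcing $\phi_\infty(\cdot-a)=\phi_\infty(\cdot-b)$ with $a\neq b$, again forcing $\phi_\infty=0$ since a nonzero $H^1$ function has no nonzero period, contradiction). Hence $z_m\to z_\infty$, and then $\phi_m \to \phi_\infty(\cdot + z_\infty)$ in $H^1$, contradicting the lower bound on $\|\phi_m-\phi_{m'}\|_{H^1}$. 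Therefore $\{\phi_m\}$ is totally bounded, $\tilde K$ is precompact, and $\pi(\tilde K)=K$ as shown, completing the proof. I would write the periodicity step by noting that if $\psi\in H^1$ satisfies $\psi(\cdot) = \psi(\cdot - v)$ for $v\neq 0$ then $|\hat\psi(\xi)|^2(1-\cos(v\cdot\xi))=0$ a.e., forcing $\hat\psi$ supported on a measure-zero set, i.e.\ $\psi=0$.
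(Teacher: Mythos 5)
The central step of your proof is flawed. You claim that if $\phi_m(\cdot-z_m)\to\phi_\infty\neq 0$ in $H^1$ and $z_m-z_{m'}$ is unbounded, then $\phi_\infty=0$ "by looking at a fixed ball." That reasoning is borrowed from the paper's previous lemma (closedness of orbits), where a \emph{fixed} function $\phi$ translated by unbounded amounts $x_n$ escapes every fixed ball. It does not transfer here, because your $\phi_m$ varies with $m$. Concretely: let $\phi_\infty\in H^1$ with $\|\phi_\infty\|_{H^1}\geq\eta$, set $z_m=m\,e_1$, and $\phi_m(x)=\phi_\infty(x+z_m)$. Then $\phi_m(\cdot-z_m)\equiv\phi_\infty$ exactly, $\|\phi_m\|_{H^1}=\|\phi_\infty\|_{H^1}\geq\eta$ so \eqref{lowerbound} is satisfied, $\phi_\infty\neq 0$, and yet $z_m$ is unbounded. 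So the bound $\eta\leq\|\phi\|_{H^1}$ alone simply does not prevent the translation parameters from escaping: it bounds the size of representatives, not \emph{where} their mass sits. Your own intuition was correct ("I must argue that the translation parameters cannot escape"), but the step you supply for it does not work.

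What is missing — and what the paper does — is a \emph{normalization of the representatives before forming $\tilde K$}. The paper first shows there is $\eps>0$ so that for every $p\in K$ one can pick $\psi(p)\in\pi^{-1}(p)$ with $\|\psi(p)\|_{H^1(B(0,1))}\geq\eps$; this is where \eqref{lowerbound} plus precompactness of $K$ is actually used, in a contradiction argument where a fixed $\phi\in H^1$ with $\|\phi\|_{H^1}\geq\eta$ would have to vanish on every unit ball. It then sets $\tilde K=\{\psi(p):p\in K\}$ and proves precompactness: given $\phi_n\in\tilde K$ with $\phi_n(\cdot-x_n)\to\phi$ in $H^1$, if $|x_n|\to\infty$ then the $\eps$ of $H^1$-mass of $\phi_n(\cdot-x_n)$ carried in $B(x_n,1)$ escapes to infinity, forcing $\|\phi\|_{H^1}$ strictly below $\lim\|\phi_n\|_{H^1}$, a contradiction. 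This concentration-at-the-origin normalization is exactly the anchor your argument lacks. Your "cover $K$ by balls $B([\chi_i],\delta)$ and match translates to $\chi_i$" idea is in the same spirit, but as written it only yields a finite cover by balls of the \emph{single} radius $\delta$ (not total boundedness, which needs all radii, and the matching translations depend on $\delta$); you then abandon it and fall back on the flawed step above. A secondary issue: defining $\tilde K$ as the $H^1$-closure of representatives of a countable dense subset can give $\pi(\tilde K)\supsetneq K$ when $K$ is precompact but not closed; the paper avoids this by assigning a representative to every $p\in K$, which also makes $\pi(\tilde K)=K$ immediate.
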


\begin{proof}%
Let $B(0,1)$ be the unit ball in $\cR^3$. We first show by
contradiction that there exists $\eps>0$ such that for all $p$ in
$K$, there exists $\psi=\psi(p)\in \pi^{-1}(p)$ such that
\begin{equation}
\label{defpsi}
\|\psi(p)\|_{H^1(B(0,1))}\geq \eps.
\end{equation}
If not, there exists a sequence $\phi_n$ in $\pi^{-1}(K)$ such that
\begin{equation}
\label{absurde}
\sup_{x_0\in \cR^3} \| \phi_n(\cdot-x_0)\|_{H^1(B(0,1))}\leq \frac{1}{n}.
\end{equation}
The precompactness of $K$ implies, extracting a subsequence from
$\phi_n$ if necessary, that there exists $\phi\in H^1$ such that
$\pi(\phi_n)\to p$ in $G\backslash H^1$.  In other words, if $\phi$
is fixed in $\pi^{-1}(p)$, $\inf_{x_0\in \cR^3}
\|\phi_n(\cdot-x_0)-\phi\|_{H^1}$ tends to $0$ as $n$ tends to
infinity. Thus, one may find a sequence $x_n$ in $\cR^3$ such that
\begin{equation}
\label{convergence}
\|\phi_n(\cdot-x_n)-\phi\|_{H^1}\underset{n\rightarrow +\infty}{\longrightarrow} 0.
\end{equation}
Now, by \eqref{absurde}, for all $x_0\in \cR^3$, $
\|\phi_n(\cdot-x_0-x_n)\|_{H^1(B(0,1))}\leq \frac{1}{n}$. Hence, by
\eqref{convergence}, for all $x_0$, $\phi$ vanishes on $B(x_0,1)$.
But then $\phi=0$, which contradicts assumption \eqref{lowerbound},
concluding the proof of the existence of $x(\phi)$.

Let $\tilde{K}=\{ \psi(p)\;| \;  p\in K\, \}$, where $\psi(p)$
satisfies \eqref{defpsi}. Of course, $\pi(\tilde{K})=K$. By the
definition of $x(\phi)$,
\begin{equation}
\label{propertytK}
\forall \; \phi\in \pi^{-1}(K), \quad \|\phi\|_{H^1(B(0,1))}\geq \eps.
\end{equation}
Let us show that $\tilde K$ is precompact. Let $\phi_n$ be a
sequence in $\tilde K$. Then by the precompactness of $K$, there
exists (extracting subsequences) $\phi\in H^1$ and a sequence $x_n$
of $\cR^3$, such that
\begin{equation}
\label{limitvn}
\lim_{n\rightarrow +\infty} \|\phi_n(\cdot-x_n)-\phi\|_{H^1}=0.
\end{equation}
Note that $K$ being precompact, $\phi_n$ is bounded in $H^1$, thus,
we may assume (extracting again)
\begin{equation}
\label{limitnorm} \lim_{n\rightarrow+\infty} \|\phi_n\|_{H^1}=\ell
\in (0,+\infty).
\end{equation}
Let us show that $x_n$ is bounded. If not, we may assume that
$|x_n|\rightarrow +\infty$. By \eqref{propertytK} and
\eqref{limitnorm}, we have
\begin{equation*}
\limsup_{n\rightarrow
\infty}\|\phi_n(\cdot-x_n)\|_{H^1(B(0,|x_n|-1))}\leq \ell-\eps.
\end{equation*}
As $|x_n|\rightarrow \infty$, we conclude that $\|\phi\|_{H^1}\leq
\ell-\eps$, contradicting \eqref{limitnorm}. Therefore, $x_n$ is
bounded. Extracting if necessary, we may assume that $x_n$
converges, which shows by \eqref{limitvn} that $\phi_n$ converges.
This concludes the proof of the precompactness of $\tilde K$.
\end{proof}

\begin{lemma}
\label{L:cont-path}
Let $u$ be a global $H^1$ solution to \eqref{E:NLS}.  Suppose
$$\pi( \{\, u(\cdot,t) \, | \, t\in [0,+\infty) \,\})$$
is precompact in $G\backslash H^1$. Then there exists $x(t)$, a
continuous path in $\mathbb{R}^3$, such that
$$
\{ \, u(\cdot-x(t),t) \, | \, t\in [0,+\infty) \,\}
$$
is precompact in $H^1$.
\end{lemma}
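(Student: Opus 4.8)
\emph{Proof proposal.} The plan is two-step: first use Lemma~\ref{L:lifting} to replace the abstract quotient precompactness by a genuine precompact family of representatives in $H^1$; then manufacture a \emph{continuous} translation $x(t)$ by a partition-of-unity patching over $[0,+\infty)$, choosing the translations carefully enough that $u(\cdot-x(t),t)$ never leaves one fixed compact set.

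If $u\equiv0$ we take $x\equiv0$, so assume $u\not\equiv0$. By conservation of mass $\|u(\cdot,t)\|_{H^1}\ge\|u_0\|_{L^2}=:\eta>0$ for all $t\ge0$, hence the set $K:=\pi(\{u(\cdot,t):t\ge0\})$, which is precompact in $G\backslash H^1$ by hypothesis, satisfies $\|\phi\|_{H^1}\ge\eta$ for every $\phi\in\pi^{-1}(K)$, i.e.\ \eqref{lowerbound} holds. Lemma~\ref{L:lifting} then gives a set $\tilde K$, precompact in $H^1$, with $\pi(\tilde K)=K$; replacing $\tilde K$ by its closure we may assume $\tilde K$ is compact. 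Fix $r=\eta/2$ and set $\tilde K^{(r)}:=\tilde K+\overline{B(0,r)}\subset H^1$, which is compact and on which $\|\cdot\|_{H^1}\ge\eta/2$. For every $t_0\ge0$ choose $w(t_0)\in\mathbb{R}^3$ with $u(\cdot-w(t_0),t_0)\in\tilde K$; since $s\mapsto u(\cdot,s)$ is continuous from $[0,+\infty)$ into $H^1$, there is an open interval $I_{t_0}\ni t_0$ on which $\|u(\cdot,t)-u(\cdot,t_0)\|_{H^1}<r$, so that $u(\cdot-w(t_0),t)\in\tilde K^{(r)}$ for all $t\in I_{t_0}$.

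Now invoke that $[0,+\infty)$ is paracompact: extract from $\{I_{t_0}\}_{t_0\ge0}$ a locally finite open refinement $\{I_j\}_j$, each $I_j$ carrying (by the refinement property) a vector $w_j$ with $u(\cdot-w_j,t)\in\tilde K^{(r)}$ for every $t\in I_j$, together with a subordinate continuous partition of unity $\{\rho_j\}$, and define the continuous path $x(t):=\sum_j\rho_j(t)w_j$ (the sum is locally finite). The one quantitative ingredient needed is the uniform translation bound
\begin{equation*}
L^*:=\sup\{|z|:\exists\,\psi\in\tilde K^{(r)}\text{ with }\psi(\cdot-z)\in\tilde K^{(r)}\}<+\infty,
\end{equation*}
which holds because $\tilde K^{(r)}$ is compact with $\|\cdot\|_{H^1}\ge\eta/2$: if $\psi_k,\psi_k(\cdot-z_k)\in\tilde K^{(r)}$ with $|z_k|\to\infty$, then along a subsequence $\psi_k\to\psi_\infty$ and $\psi_k(\cdot-z_k)\to\chi$ in $H^1$, while $\psi_k(\cdot-z_k)\rightharpoonup0$ weakly in $H^1$ (strong convergence of $\psi_k$ together with $|z_k|\to\infty$), forcing $\chi=0$, contradicting $\|\chi\|_{H^1}\ge\eta/2$. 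Given $t\ge0$, pick $j_0$ with $\rho_{j_0}(t)>0$; for every $j$ with $\rho_j(t)>0$ we have $t\in I_j\cap I_{j_0}$, so $u(\cdot-w_j,t)$ and $u(\cdot-w_{j_0},t)$ both lie in $\tilde K^{(r)}$ and differ by translation by $w_j-w_{j_0}$, whence $|w_j-w_{j_0}|\le L^*$; therefore $|x(t)-w_{j_0}|\le\sum_j\rho_j(t)|w_j-w_{j_0}|\le L^*$ and
\begin{equation*}
u(\cdot-x(t),t)=\big[u(\cdot-w_{j_0},t)\big]\big(\cdot-(x(t)-w_{j_0})\big)\in\mathcal{K}:=\{\psi(\cdot-z):\psi\in\tilde K^{(r)},\ |z|\le L^*\}.
\end{equation*}
Since the translation map $(\psi,z)\mapsto\psi(\cdot-z)$ is jointly continuous on $H^1\times\mathbb{R}^3$, the set $\mathcal{K}$ is compact, and hence $\{u(\cdot-x(t),t):t\in[0,+\infty)\}\subset\mathcal{K}$ is precompact in $H^1$.

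The only step that is not pure bookkeeping with paracompactness and the $H^1$-continuity of the flow is the uniform bound $L^*<\infty$: it is precisely what prevents the partition-of-unity interpolation between the a priori unrelated representatives $w_j$ from carrying the solution out of a fixed compact set, and I expect it to be the point requiring the most care (it hinges on the compactness of $\tilde K^{(r)}$ and, crucially, the mass lower bound $\|\cdot\|_{H^1}\ge\eta/2$ on it, which is where conservation of mass enters).
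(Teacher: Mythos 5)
Your proof is correct, and it takes a genuinely different route from the paper's. The paper constructs $x(t)$ by discretizing time: on each $[N,N+1]$ one picks a mesh with spacing $\delta_N$ adapted to the uniform continuity of $t\mapsto u(t)$, selects translations $x(t_n)$ at the mesh points so that $u(\cdot-x(t_n),t_n)\in\tilde K$, and interpolates piecewise linearly; precompactness of $\{u(\cdot-x(t),t)\}$ is then checked along sequences $s_k$ by extracting convergent subsequences of $u(\cdot-x(t_{n(k)\pm1}),\cdot)$, deducing that the translation increments $x(t_{n(k)})-x(t_{n(k)-1})$ converge (an implicit use of the fact that a nonzero $H^1$ function cannot converge to itself after arbitrarily large translations), and concluding for the interpolated point $x(s_k)$. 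You instead use paracompactness of $[0,\infty)$ and a partition of unity to glue local translations, and you isolate the crucial quantitative fact as a uniform translation bound $L^*<\infty$ on the compact set $\tilde K^{(r)}$, which immediately produces a single compact container $\mathcal K\supset\{u(\cdot-x(t),t)\}$. The two arguments rest on the same mechanism (compactness plus the mass-conservation lower bound on $\|u(t)\|_{H^1}$ forbids two representatives of the same orbit inside a compact set from being far-apart translates of each other), but yours packages it more cleanly: $L^*$ is established once and for all rather than re-derived subsequence-by-subsequence, and the precompactness conclusion becomes a one-line containment in a compact set. The paper's version is more elementary (no appeal to paracompactness or partitions of unity), which is presumably why the authors chose it. One minor point worth noting: after replacing $\tilde K$ by its closure you no longer necessarily have $\pi(\tilde K)=K$, only $K\subset\pi(\tilde K)$; but that inclusion is all your argument uses, so this is harmless.
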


\begin{proof}
By taking $K=\pi( \{\, u(\cdot,t) \, | \, t\in [0,+\infty) \,\})$ in
Lemma \ref{L:lifting}, we obtain $\tilde K$ precompact in $H^1$ such
that $\pi(\tilde K)=K$.  For each $N$, the map $u:[N,N+1]\to H^1$ is
uniformly continuous. Thus, for each $N$, there exists $\delta_N>0$
such that if $t,t'\in [N,N+1]$ and $|t-t'|\leq \delta_N$, then
$\|u(t,\cdot)-u(t',\cdot)\|_{H^1} \leq 1/N$.  Let $t_n$ be the
increasing sequence of times $\to +\infty$ defined to include evenly
spaced elements with density $\delta_N$ in $[N,N+1]$ for each $N$.
(Thus, $t_n$ is an increasing sequence with possibly more elements
per unit interval as we move out to $+\infty$).  For each $n$,
select $x(t_n)\in \mathbb{R}^3$ such that $u(\cdot-x(t_n),t_n)\in
\tilde K$.  Now define $x(t)$ to be the continuous function that
connects $x(t_n)$ to $x(t_{n+1})$ by a straight line in
$\mathbb{R}^3$.

We claim that $\{ \, u(\cdot-x(t),t)\,  | \, t\in [0,+\infty)\, \}$
is precompact in $H^1$.  Indeed, let $s_k$ be a sequence in
$[0,+\infty)$.  Then there exists a subsequence (also labeled $s_k$)
such that either $s_k$ converges to some finite $s_0$ or $s_k\to
+\infty$.  In the first case, $u(\cdot-x(s_k),s_k)\to
u(\cdot-x(s_0),s_0)$ by the continuity of $u(t)$ and $x(t)$.  In the
second case, for each $k$, obtain the unique index $n(k)$ such that
$t_{n(k)-1} \leq s_k <t_{n(k)}$.   By the precompactness of $\tilde
K$, we can pass to a subsequence (in $k$) such that both
$u(\cdot-x(t_{n(k)-1}), t_{n(k)-1})$ and
$u(\cdot-x(t_{n(k)}),t_{n(k)})$ converge in $H^1$.  By the density
of the $t_n$ sequence and uniform continuity of $u$, we obtain that
$u(\cdot-x(t_{n(k)-1}), t_{n(k)})$ converges and that it suffices to
show that $u(\cdot-x(s_k),t_{n(k)})$ has a convergent subsequence.
But since both $u(\cdot-x(t_{n(k)-1}), t_{n(k)})$ and
$u(\cdot-x(t_{n(k)}), t_{n(k)})$ converge, we have that
$x(t_{n(k)-1})-x(t_{n(k)})$ converges.  Recall that $x(s_k)$ lies on
the line segment joining $x(t_{n(k)-1})$ and $x(t_{n(k)})$, and
thus, $x(s_k)-x(t_{n(k)-1})$ converges (after passing to a
subsequence). Hence, $u(\cdot-x(s_k),t_{n(k)})$ converges in $H^1$.
\end{proof}

Thus, to prove Prop. \ref{P:crit_compact}, it suffices to prove that
\begin{equation}
\label{E:projection}
\pi(\{ \, u(\cdot, t) \, | \, t\in [0,+\infty) \, \})
\end{equation}
is precompact in $G\backslash H^1$.  Since $G\backslash H^1$ is complete, if we assume that \eqref{E:projection} is not precompact in $G\backslash H^1$, then there exists a sequence $\{ [u(t_n)]\}$ in $G\backslash H^1$ and $\eta>0$ such that $d([u(t_n)],[u(t_{n'})]) \geq \eta$, or equivalently, \eqref{E:negation} in the proof of Prop \ref{P:crit_compact} holds.

\end{document}